\newtheorem*{rep@theorem}{\rep@title}
\newcommand{\newreptheorem}[2]{%
\newenvironment{rep#1}[1]{%
 \def\rep@title{#2 \ref{##1}}%
 \begin{rep@theorem}}%
 {\end{rep@theorem}}}
\newtheorem{theorem}{Theorem}[section]
\newtheorem{proposition}[theorem]{Proposition}
\newtheorem{conjecture}[theorem]{Conjecture}
\newtheorem{lemma}[theorem]{Lemma}
\theoremstyle{definition}
\newtheorem{remark}[theorem]{Remark}
\newtheorem{example}[theorem]{Example}
\DeclareMathOperator{\righ}{right}
\DeclareMathOperator{\lef}{left}
\DeclareMathOperator{\sd}{ssd}
\DeclareMathOperator{\del}{del}
\DeclareMathOperator{\SW}{SW}
\DeclareMathOperator{\revstack}{revstack}
\DeclareMathOperator{\rev}{rev}
\begin{document}
\title{Fertility Monotonicity and Average Complexity of the Stack-Sorting Map}
\author{Colin Defant}
\address{Princeton University \\ Fine Hall, 304 Washington Rd. \\ Princeton, NJ 08544}
\email{cdefant@princeton.edu}

\begin{abstract}
Let $\mathcal D_n$ denote the average number of iterations of West's stack-sorting map $s$ that are needed to sort a permutation in $S_n$ into the identity permutation $123\cdots n$. We prove that \[0.62433\approx\lambda\leq\liminf_{n\to\infty}\frac{\mathcal D_n}{n}\leq\limsup_{n\to\infty}\frac{\mathcal D_n}{n}\leq \frac{3}{5}(7-8\log 2)\approx 0.87289,\] where $\lambda$ is the Golomb-Dickman constant. Our lower bound improves upon West's lower bound of $0.23$, and our upper bound is the first improvement upon the trivial upper bound of $1$. We then show that fertilities of permutations increase monotonically upon iterations of $s$. More precisely, we prove that $|s^{-1}(\sigma)|\leq|s^{-1}(s(\sigma))|$ for all $\sigma\in S_n$, where equality holds if and only if $\sigma=123\cdots n$. This is the first theorem that manifests a law-of-diminishing-returns philosophy for the stack-sorting map that B\'ona has proposed. Along the way, we note some connections between the stack-sorting map and the right and left weak orders on $S_n$.  
\end{abstract}

\maketitle

\bigskip

\section{Introduction}\label{Sec:Intro}

Motivated by a problem involving sorting railroad cars, Knuth introduced a certain ``stack-sorting'' machine in his book \emph{The Art of Computer Programming} \cite{Knuth}. Knuth's analysis of this sorting machine led to several advances in combinatorics, including the notion of a permutation pattern and the kernel method \cite{Banderier, Bona, Kitaev, Linton}. In his 1990 Ph.D. dissertation, West defined a deterministic variant of Knuth's machine. This variant, which is a function that we denote by $s$, has now received a huge amount of attention (see \cite{Bona, BonaSurvey, DefantCounting, DefantCatalan} and the references therein). West's original definition makes use of a stack that is allowed to hold entries from a permutation. Here, a \emph{permutation} is an ordering of a finite set of integers, written in one-line notation. Let $S_n$ denote the set of permutations of the set $[n]:=\{1,\ldots,n\}$. Assume we are given an input permutation $\pi=\pi_1\cdots\pi_n$. Throughout this procedure, if the next entry in the input permutation is smaller than the entry at the top of the stack or if the stack is empty, the next entry in the input permutation is placed at the top of the stack. Otherwise, the entry at the top of the stack is annexed to the end of the growing output permutation. This procedure stops when the output permutation has length $n$. We then define $s(\pi)$ to be this output permutation. Figure~\ref{Fig1} illustrates this procedure and shows that $s(4162)=1426$.  

\begin{figure}[h]
\begin{center}
\includegraphics[width=1\linewidth]{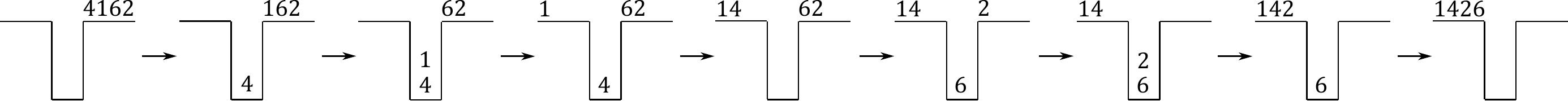}
\end{center}  
\caption{The stack-sorting map $s$ sends $4162$ to $1426$.}
\end{figure}\label{Fig1}

There is also a simple recursive definition of the map $s$. First, we declare that $s$ sends the empty permutation to itself. Given a nonempty permutation $\pi$, we can write $\pi=LmR$, where $m$ is the largest entry in $\pi$. We then define $s(\pi)=s(L)s(R)m$. For example, 
\[s(5273614)=s(52)\,s(3614)\,7=s(2)\,5\,s(3)\,s(14)\,67=253\,s(1)\,467=2531467.\]

One of the central notions in the investigation of the stack-sorting map is that of a \emph{$t$-stack-sortable} permutation, which is a permutation $\pi$ such that $s^t(\pi)$ is increasing ($s^t$ is the $t$-fold iterate of $s$). Let $W_t(n)$ be the number of $t$-stack-sortable permutations in $S_n$. The stack-sorting map moves the largest entry in a permutation to the end, so a simple inductive argument shows that every permutation of length $n$ is $(n-1)$-stack-sortable. It follows from Knuth's analysis of his stack-sorting machine that the $1$-stack-sortable permutations are precisely the permutations that avoid the pattern $231$. Thus, $W_1(n)$ is the $n^\text{th}$ Catalan number $C_n=\frac{1}{n+1}{2n\choose n}$. Settling a conjecture of West, Zeilberger \cite{Zeilberger} proved that $W_2(n)=\frac{2}{(n+1)(2n+1)}{3n\choose n}$. The current author has obtained nontrivial asymptotic lower bounds for $W_t(n)$ for every fixed $t\geq 3$, and he has obtained nontrivial asymptotic upper bounds for $W_3(n)$ and $W_4(n)$ \cite{DefantCounting, DefantPreimages}. He has also devised a polynomial-time algorithm for computing $W_3(n)$ \cite{DefantCounting}. Instead of focusing only on $t$-stack-sortable permutations when $t\geq 3$ is small and fixed, West realized that he could make progress if he attacked from the other side. He considered the cases $t=n-2$ and $t=n-3$. He showed that a permutation in $S_n$ is $(n-2)$-stack-sortable if and only if it does not end in the suffix $n1$ \cite{West}. He also characterized and enumerated $(n-3)$-stack-sortable permutations in $S_n$. The case $t=n-4$ was treated in \cite{Claessonn-4}.  

Define the \emph{stack-sorting tree on $S_n$} to be the rooted tree with vertex set $S_n$ in which the root is the identity permutation $123\cdots n$ and in which each nonidentity permutation $\pi$ is a child of $s(\pi)$. The \emph{stack-sorting depth} of a permutation $\pi\in S_n$, which we denote by $\sd(\pi)$, is the depth of $\pi$ in this tree. Equivalently, $\sd(\pi)$ is the smallest nonnegative integer $t$ such that $\pi$ is $t$-stack-sortable. It is natural to view $s$ as a sorting algorithm that acts iteratively on an input permutation until reaching an increasing permutation. It requires $2n$ elementary operations to apply the map $s$ to a permutation in $S_n$, so $2n\sd(\pi)$ is the time complexity of $s$ on the input $\pi$. We are interested in the quantity \[\mathcal D_n=\frac{1}{n!}\sum_{\pi\in S_n}\sd(\pi),\] which is the average depth of the stack-sorting tree on $S_n$. Note that $2n\mathcal D_n$ is the average time complexity of the sorting algorithm that iteratively applies $s$. West \cite{West} proved that \[0.23\leq\liminf_{n\to\infty}\frac{\mathcal D_n}{n}\leq\limsup_{n\to\infty}\frac{\mathcal D_n}{n}\leq 1,\] where the upper bound of $1$ follows from the observation that $\sd(\pi)\leq n-1$ for all $\pi\in S_n$. He also commented that it would probably not be possible to obtain a lower bound larger than $1/2$ or an upper bound smaller than $1$ via his pattern-avoidance approach to the problem. Our first main result is as follows. 

\begin{theorem}\label{Thm1}
We have \[0.62433\approx\lambda\leq\liminf_{n\to\infty}\frac{\mathcal D_n}{n}\leq\limsup_{n\to\infty}\frac{\mathcal D_n}{n}\leq \frac{3}{5}(7-8\log 2)\approx 0.87289,\] where $\lambda$ is the Golomb-Dickman constant. 
\end{theorem}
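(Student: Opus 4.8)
The plan is to prove the two inequalities separately, in each case sandwiching $\sd(\pi)$ between quantities whose averages over $S_n$ can be controlled asymptotically. First I would record two structural consequences of the recursive description of $s$. \emph{(i)} Since $s$ sends the largest entry of its input to the last position and thereafter never disturbs it (because $s(\tau\, n)=s(\tau)\,n$), for $\pi\neq 123\cdots n$ we have $\sd(\pi)=1+\sd(\hat{s}(\pi))$, where $\hat{s}(\pi)$ is $s(\pi)$ with its last entry deleted; iterating, after $k$ applications the entries $n,n-1,\ldots,n-k+1$ are frozen in their correct positions and the remaining entries evolve under $s$ exactly as iterated stack-sorting on a permutation in $S_{n-k}$. \emph{(ii)} If the first $j$ positions of a permutation already read $1,2,\ldots,j$, then $s$ fixes that prefix, i.e.\ $s(12\cdots j\,\gamma)=12\cdots j\, s(\gamma)$; hence the ``sorted prefix length'' of $s^t(\pi)$ is nondecreasing in $t$, and $\sd(\pi)$ equals the number of iterations needed for it to grow from its initial value to $n$. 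Both facts reduce $\sd$ to questions about how quickly certain small entries crawl into place.

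For the lower bound I would produce a statistic $\beta(\pi)\leq\sd(\pi)$. The point is that a small entry can be forced to advance only one position per iteration precisely when the entries currently to its left form an increasing run, and for a typical $\pi$ one can pin down a ``bottleneck'' entry that remains trapped behind an increasing wall of length $\beta(\pi)$ throughout the sorting process, forcing $\sd(\pi)\geq\beta(\pi)$. The remaining task is then to show $\frac{1}{n!}\sum_{\pi\in S_n}\beta(\pi)=\lambda n(1+o(1))$: one identifies $\beta$, via the fundamental (Foata) correspondence between one-line and cycle notation, with a permutation statistic having the same limiting mean as the longest cycle length of a uniform random permutation, which is $\lambda n(1+o(1))$ by the Golomb--Dickman theorem. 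This yields $\liminf_{n\to\infty}\mathcal D_n/n\geq\lambda$.

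For the upper bound I would argue in the opposite direction, showing that a positive-density collection of large entries becomes frozen ``for free'' early in the process. Using \emph{(i)}, one estimates, for each $i$, the probability that after a bounded number of iterations the entry $n-i$ has already come to rest immediately to the left of $n-i+1,\ldots,n$, so that the frozen top block is longer than the trivial estimate $\sd(\pi)\leq n-1$ would allow. Summing these gains over $i$ and passing to the limit converts the bookkeeping into an explicit integral, and the resulting computation evaluates to $\int_0^1 g(x)\,dx=\frac{3}{5}(7-8\log 2)$, giving $\limsup_{n\to\infty}\mathcal D_n/n\leq\frac{3}{5}(7-8\log 2)$.

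The main obstacle is the lower bound, because the naive choices of tracked entry are too weak. For instance, the entry $1$ moves strictly leftward at each iteration, so it reaches position $1$ after at most (its initial position) $-\,1\leq n-1$ steps and on average after fewer than $n/2$ steps, which falls short of $\lambda n$; any single-entry argument of this type is therefore doomed. The delicate step is to select the bottleneck entry correctly and to prove that, as $s$ is iterated, the relevant left-hand run really does stay increasing and of the claimed typical length — which requires genuinely controlling the combinatorics of $s$ along an orbit — and then to recognize the extremal statistic that emerges as (asymptotically) the longest-cycle statistic, so that the Golomb--Dickman constant appears. The upper bound, though computationally heavier, should follow from a comparatively routine linearity-of-expectation estimate once the right ``freezing'' events are isolated; the connections between $s$ and the right and left weak orders on $S_n$ enter mainly as convenient bookkeeping for tracking how far each entry must still travel.
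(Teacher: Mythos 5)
Your lower-bound strategy is, in outline, the paper's: find a statistic $\beta$ with $\beta(\pi)\leq\sd(\pi)$ and show via the Foata transition bijection that $\beta$ has the same limiting mean as the longest cycle length of a uniform element of $\mathfrak S_n$, so that Golomb--Dickman gives $\liminf\mathcal D_n/n\geq\lambda$. But the sketch never actually identifies $\beta$. In the paper it is $\beta(\pi)=\max_\ell|\mathscr B_\ell(\pi)|$, the length of the longest block between consecutive right-to-left maxima, and the inequality $\sd'(\pi)\geq\beta(\pi)$ (with $\sd'(\pi)=\sd(\pi 0)$) is \emph{not} proved by tracking a single bottleneck entry behind an ``increasing wall'' --- that wall is not obviously preserved under iteration of $s$ once there are entries to the right of the tracked element --- but rather by the ordered-set-partition bookkeeping: each $\mathcal M_\ell(\pi)$ contains at most one element of the chosen block, and the $\mathcal M_\ell$'s partition $[n]$. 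You also need, and do not supply, an analogue of Proposition~\ref{Prop1}, the comparison $\mathcal D_n'/n-\mathcal D_n/n\to 0$; your facts (i)--(ii) point in that direction but do not give the reduction.

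The upper bound is essentially asserted rather than proved. You claim that a linearity-of-expectation ``freezing'' estimate ``converts the bookkeeping into an explicit integral'' that ``evaluates to $\frac{3}{5}(7-8\log 2)$,'' but no integrand is specified, no mechanism is given for why the saving has positive density, and the constant does not arise as a single integral $\int_0^1 g(x)\,dx$. In the paper the argument conditions on the positions $i_1<i_2<\cdots$ of the right-to-left maxima, proves via a ballot-type lattice-path estimate (Lemma~\ref{Lem6}) that the tail block $\mathscr E_m(\pi)$ is ``quarantined'' with probability at least $1-\bigl((n-i_m)/(i_m-i_{m-1})\bigr)^2$, deduces $\sd'(\pi)\leq i_m$ when it is quarantined (Lemma~\ref{Lem5}), and then iterates over $m$; the resulting nested integrals $F_\ell$ satisfy a two-term linear recurrence whose geometric sum gives $\tfrac{2}{5}a_0+2b_0=\tfrac{3}{5}(7-8\log 2)$. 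That recursive structure is the real content of the upper bound and is absent from your sketch. Finally, your closing remark that the weak-order lemmas serve as ``bookkeeping'' for Theorem~\ref{Thm1} is a misreading of the paper: Lemma~\ref{Lem9} and Theorem~\ref{Thm4} are used only for the fertility-monotonicity result, Theorem~\ref{Thm2}.
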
 

Another crucial notion in the study of the stack-sorting map is that of the \emph{fertility} of a permutation $\pi$, which is simply $|s^{-1}(\pi)|$. Many problems concerning the stack-sorting map can be phrased in terms of fertilities. For example, computing $W_2(n)$ is equivalent to finding the sum of the fertilities of all of the $231$-avoiding (i.e., $1$-stack-sortable) permutations in $S_n$. The author found methods for computing fertilities of permutations \cite{DefantCounting, DefantPostorder, DefantPreimages}, which led to the above-mentioned advancements in the investigation of $t$-stack-sortable permutations when $t\in\{3,4\}$. Permutations with fertility $1$ (called uniquely sorted permutations) possess some remarkable enumerative properties \cite{DefantEngenMiller, DefantCatalan, Hanna}. There is also a surprising connection between fertilities of permutations and a formula that converts from free to classical cumulants in noncommutative probability theory \cite{DefantTroupes}; the author has used this connection to prove new results about the map $s$.  

In Exercise 23 of Chapter 8 in \cite{Bona}, B\'ona asks the reader to find the element of $S_n$ with the largest fertility. As one might expect, the answer is $123\cdots n$. The proof is not too difficult, but it is also not trivial. Our second main theorem generalizes this result by showing that the fertility statistic is strictly monotonically increasing as one moves up the stack-sorting tree. 

\begin{theorem}\label{Thm2}
For every permutation $\sigma\in S_n$, we have \[|s^{-1}(\sigma)|\leq|s^{-1}(s(\sigma))|,\] where equality holds if and only if $\sigma=123\cdots n$. 
\end{theorem}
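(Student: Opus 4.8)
The plan is to induct on $n$, leaning on the recursive description $s(LnR)=s(L)\,s(R)\,n$, where $n$ denotes the largest entry. First dispose of the case in which $\sigma$ does not end in its largest entry: since $s$ always outputs a permutation ending in its largest entry, we then have $s^{-1}(\sigma)=\emptyset$, so $|s^{-1}(\sigma)|=0$, while $\sigma\in s^{-1}(s(\sigma))$ gives $|s^{-1}(s(\sigma))|\geq 1$. As $123\cdots n$ does end in its largest entry, this case is finished with a strict inequality, consistent with the theorem. So from now on assume $\sigma=\alpha n$ with $\alpha\in S_{n-1}$, and note that $s(\sigma)=s(\alpha)\,n$.

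Next, record the fertility recursion. Because $s$ preserves the set of values, any $\tau\in s^{-1}(\alpha n)$ has largest entry $n$ and so may be written $\tau=LnR$; then $s(\tau)=\alpha n$ amounts to $s(L)\,s(R)=\alpha$, which, once $j:=|L|$ is fixed, forces $s(L)=\alpha_1\cdots\alpha_j$, $s(R)=\alpha_{j+1}\cdots\alpha_{n-1}$, and the value sets of $L$ and $R$ as well; conversely any compatible pair $(L,R)$ yields a preimage. Writing $f=|s^{-1}(\cdot)|$ for the (pattern-invariant) fertility, this gives
\[f(\alpha n)=\sum_{j=0}^{n-1}f(\alpha_1\cdots\alpha_j)\,f(\alpha_{j+1}\cdots\alpha_{n-1}),\]
together with the analogous identity with $\alpha$ replaced by $s(\alpha)$. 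The theorem thus reduces to showing that the second convolution sum (for $s(\alpha)$) is at least the first (for $\alpha$).

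I expect this inequality to be the crux. It cannot be proved termwise — in small examples one checks that the $s(\alpha)$-side dominates only after a genuine rearrangement of the contributions — so one needs a structural comparison of $\alpha$ with $s(\alpha)$. My first attempt would be via the decreasing binary tree $T$ of $\alpha$ (whose in-order reading is $\alpha$ and whose postorder reading is $s(\alpha)$), rewriting both sides via $T$ and searching for an explicit injection between the resulting families of preimage trees. A more flexible tool, and the one I would ultimately bet on, is the author's Fertility Formula, which expresses $f(\sigma)$ as a sum over the valid hook configurations of $\sigma$ of products of Catalan numbers indexed by the regions cut out by the hooks. Applying $s$ never increases the number of descents (an easy induction on the recursive definition), so a valid hook configuration of $\sigma$ ought to be transportable to one of $s(\sigma)$ by merging regions; since $C_aC_b\leq C_{a+b}$ (concatenate Dyck paths) and Catalan numbers increase, every merge can only increase the weight, which would give the inequality. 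Organizing this transport of hook configurations is where I would expect the connections with the right and left weak orders — advertised in the introduction — to do the work, and it is the step I anticipate being hardest.

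Finally, for the equality clause one propagates the inductive hypothesis through the recursion: equality in $|s^{-1}(\sigma)|\leq|s^{-1}(s(\sigma))|$ should force $f(w)=f(s(w))$ for every prefix and suffix $w$ of $\alpha$ contributing a nonzero term, whence by induction each such $w$ — and therefore $\alpha$, and therefore $\sigma=\alpha n$ — is increasing, i.e.\ $\sigma=123\cdots n$.
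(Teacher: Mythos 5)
Your Case 1 (when $\sigma$ does not end in its largest entry) is correct, and your fertility recursion
\[
f(\alpha n)=\sum_{j=0}^{n-1}f(\alpha_1\cdots\alpha_j)\,f(\alpha_{j+1}\cdots\alpha_{n-1})
\]
is a valid and well-known consequence of the recursive definition $s(LnR)=s(L)s(R)n$. But at exactly the point you yourself identify as the crux — comparing this convolution for $\alpha$ against the analogous one for $s(\alpha)$ — the proposal stops being a proof. You sketch three possible routes (decreasing-binary-tree transport, a hook-configuration fertility formula whose weights are Catalan products, weak-order comparisons), and none of them is carried out or even made precise enough to assess. In particular, the claim that the fertility formula's terms are simply products of Catalan numbers indexed by regions is not what the author's ``Fertility Formula'' actually says (the genuine formula involves more delicate combinatorial weights), and the proposed ``transport by merging regions'' from hook configurations of $\sigma$ to those of $s(\sigma)$ is an unproven — and, as stated, rather optimistic — bijective/monotone claim. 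The line ``$C_aC_b\leq C_{a+b}$ plus descents don't increase'' is far too coarse: the convolution sums in question mix fertilities of arbitrary prefix/suffix factors, not Catalan numbers, and the fact that $s$ is non-increasing on descent number does not on its own control how the factors rearrange.

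It is worth noting that the paper's actual proof does \emph{not} take your route. It does not split at the position of the largest entry; it instead applies the Decomposition Lemma at the tail-bound descent (the position of $n-\ell$, where $\ell$ is the tail length of $\sigma$), splits into three cases according to whether the prefix $L$ before $n-\ell$ is empty, contains $n-\ell-1$, or not, and uses two auxiliary results: Lemma~\ref{Lem9} (that $\widetilde t_i$ gives an injection, sometimes a bijection, between preimage sets, which yields Theorem~\ref{Thm4} on the left weak order) and Lemma~\ref{Lem10} (an inequality obtained by cyclically moving the first entry to just before the tail). Your reduction to a convolution inequality is a genuinely different starting point, and if you want to pursue it you would need to supply a concrete inequality between the two convolutions — likely by finding an explicit injection between the preimage sets parametrized by the two sums, which is precisely what the paper avoids by choosing a different split. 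As written, the central inequality — and with it the theorem — remains unproved.
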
  

Theorem \ref{Thm2} represents a step toward a law-of-diminishing-returns philosophy for the stack-sorting map that Mikl\'os B\'ona has postulated. Roughly speaking, his idea is that each successive iteration of the stack-sorting map should be less efficient in sorting permutations than the previous iterations. A concrete formulation of this idea manifests itself in B\'ona's conjecture that for each fixed $n\geq 1$, the sequence $W_1(n),W_2(n),\ldots,W_{n-1}(n)$ is log-concave (meaning $W_{t+1}(n)/W_t(n)\geq W_{t+2}(n)/W_{t+1}(n)$ for all $1\leq t\leq n-2$) \cite{BonaBoca}. Said differently, B\'ona's conjecture states that the average fertility of a $(t+1)$-stack-sortable permutation in $S_n$ is at most the average fertility of a $t$-stack-sortable permutation in $S_n$. While Theorem \ref{Thm2} does not imply this conjecture, it is a step in the right direction. 

\begin{remark}\label{Rem1}
Suppose $\pi=\pi_1\cdots\pi_n\in S_n$, and let $i\in[n-1]$. If $\pi_i>\pi_{i+1}$, let $t_i(\pi)$ be the permutation obtained from $\pi$ by swapping the positions of the entries $\pi_i$ and $\pi_{i+1}$. If $\pi_i<\pi_{i+1}$, let $t_i(\pi)=\pi$. If $i+1$ appears to the left of $i$ in $\pi$, let $\widetilde t_i(\pi)$ be the permutation obtained by swapping the positions of $i$ and $i+1$ in $\pi$. Otherwise, let $\widetilde t_i(\pi)=\pi$. The \emph{right weak order} on $S_n$ is the partial order $\leq_{\righ}$ on $S_n$ defined by saying that $\pi'\leq_{\righ} \pi$ if there exists a sequence $i_1,\ldots,i_m$ of elements of $[n-1]$ such that $t_{i_m}\circ\cdots\circ t_{i_1}(\pi)=\pi'$. The \emph{left weak order} on $S_n$ is the partial order $\leq_{\lef}$ on $S_n$ defined by saying that $\pi'\leq_{\lef} \pi$ if there exists a sequence $i_1,\ldots,i_m$ of elements of $[n-1]$ such that $\widetilde t_{i_m}\circ\cdots\circ \widetilde t_{i_1}(\pi)=\pi'$.  

Theorem \ref{Thm2} is a little bit strange in view of the relationship between the stack-sorting map and these two partial orders. It is not difficult to show that for every permutation $\sigma\in S_n$, we have $s(\sigma)\leq_{\righ} \sigma$. Therefore, one might expect to prove Theorem \ref{Thm2} by first establishing that $|s^{-1}(\pi')|\geq |s^{-1}(\pi)|$ whenever $\pi'\leq_{\righ} \pi$. However, this turns out to be false. We have $31425\leq_{\righ} 34125$, but one can show that $|s^{-1}(31425)|=1<4=|s^{-1}(34125)|$. On the other hand, we \emph{will} be able to prove (see Theorem~\ref{Thm4} below) that 
\begin{equation}\label{Eq2}
|s^{-1}(\pi')|\geq |s^{-1}(\pi)|\quad\text{whenever}\quad \pi'\leq_{\lef} \pi.
\end{equation} 
Unfortunately, this inequality does not immediately imply Theorem~\ref{Thm2} because the left weak order is not compatible with the action of the stack-sorting map. To see this, note that $s(231)=213\not\leq_{\lef} 231$. Our proof of Theorem~\ref{Thm2} will combine \eqref{Eq2} with the Decomposition Lemma proved in \cite{DefantCounting}.  \hspace*{\fill}$\lozenge$ 
\end{remark}

\section{Average Depth}

\subsection{Preliminary Results} 

Let us begin this section with some basic terminology. The \emph{normalization} of a permutation $\pi$ is the permutation in $S_n$ obtained by replacing the $i^\text{th}$-smallest entry in $\pi$ with $i$ for all $i$. For example, the normalization of $4682$ is $2341$. We say two permutations have the \emph{same relative order} if their normalizations are equal. We will tacitly use the fact, which is clear from either definition of the stack-sorting map, that $s(\pi)$ and $s(\pi')$ have the same relative order whenever $\pi$ and $\pi'$ have the same relative order. Furthermore, permutations with the same relative order have the same fertility.

A \emph{right-to-left maximum} of a permutation $\pi=\pi_1\cdots\pi_n$ is an entry $\pi_i$ such that $\pi_i>\pi_j$ for every $j\in\{i+1,\ldots,n\}$. For each nonnegative integer $r\leq n$, let $\del_r(\pi)$ be the permutation obtained by deleting the $r$ smallest entries from $\pi$. For example, $\del_2(436718)=4678$. If $r=n$, then $\del_r(\pi)$ is the empty permutation. 

\begin{lemma}\label{Lem2}
Let $\pi=\pi_1\cdots\pi_n$ be a permutation. For all nonnegative integers $r$ and $t$ with $r\leq n$, we have \[s^t(\del_r(\pi))=\del_r(s^t(\pi)).\] 
\end{lemma}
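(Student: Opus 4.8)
The plan is to reduce the claim to the single-iteration case $t=1$ and then induct, so the heart of the matter is showing $s(\del_r(\pi))=\del_r(s(\pi))$ for all $r\le n$. Once this is established, the general statement follows by a routine induction on $t$: assuming $s^{t-1}(\del_r(\pi))=\del_r(s^{t-1}(\pi))$, we apply the $t=1$ case to the permutation $s^{t-1}(\pi)$ to get $s\bigl(\del_r(s^{t-1}(\pi))\bigr)=\del_r\bigl(s^t(\pi)\bigr)$, and combining these two equalities yields $s^t(\del_r(\pi))=\del_r(s^t(\pi))$. The base case $t=0$ is trivial, and the case $r=0$ is trivial for every $t$, so we may assume $r\ge 1$.

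For the case $t=1$, I would argue by induction on $n$ (with $r$ fixed), using the recursive definition $s(\pi)=s(L)s(R)m$ where $\pi=LmR$ and $m=n$ is the largest entry. Since $r\ge 1$, the largest entry $m=n$ survives in $\del_r(\pi)$, so writing $\del_r(\pi)=L'\,n\,R'$, one checks directly from the definition of $\del_r$ that $L'$ and $R'$ are obtained from $L$ and $R$ by deleting exactly those entries among the $r$ smallest entries of $\pi$ that happen to lie in $L$, respectively in $R$. That is, if $a$ of the $r$ smallest entries of $\pi$ lie in $L$ and $b=r-a$ of them lie in $R$, then $L'=\del_a(L)$ and $R'=\del_b(R)$ — here I am using that the $a$ smallest entries of $L$ are precisely the ones deleted (they are among the $r$ globally smallest, hence smaller than everything retained). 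Then
\[
s(\del_r(\pi))=s(L')\,s(R')\,n=s(\del_a(L))\,s(\del_b(R))\,n.
\]
By the inductive hypothesis applied to $L$ (with parameter $a$) and to $R$ (with parameter $b$), this equals $\del_a(s(L))\,\del_b(s(R))\,n$. On the other hand $s(\pi)=s(L)s(R)n$, and deleting the $r$ smallest entries of $\pi$ from this word deletes the $a$ smallest entries of $s(L)$ from the block $s(L)$ and the $b$ smallest entries of $s(R)$ from the block $s(R)$, leaving $n$ untouched; hence $\del_r(s(\pi))=\del_a(s(L))\,\del_b(s(R))\,n$ as well. Comparing the two displays finishes the induction.

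The main obstacle — really the only point requiring care — is the bookkeeping of which small entries land in $L$ versus $R$ and the verification that "the $a$ smallest entries of $L$" coincides with "the entries of $L$ that are among the $r$ smallest of $\pi$," and likewise for $R$. This is true because every entry of $L$ retained in $\del_r(\pi)$ is larger than every one of the $r$ globally smallest entries, so in particular larger than every deleted entry of $L$; thus the deleted entries of $L$ form an initial segment of $L$'s value-order, i.e. they are its $a$ smallest. One must also note the edge cases where $a=0$ or $b=0$ (so $\del_a$ or $\del_b$ is the identity) and where $L$ or $R$ is empty, but these cause no trouble. A small alternative to the induction on $n$: one could instead argue directly from West's stack definition by observing that running the stack on $\del_r(\pi)$ produces exactly the same sequence of push/pop decisions relative to the retained entries as running it on $\pi$, since deleting the smallest entries never changes a comparison between two retained entries; this gives a cleaner conceptual proof but is slightly messier to write rigorously, so I would favor the recursive-definition induction above.
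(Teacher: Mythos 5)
Your proposal is correct and matches the paper's proof essentially verbatim: reduce to $t=1$ by induction on $t$, then induct on $n$ via the recursive identity $s(\pi)=s(L)s(R)m$, splitting the $r$ deleted entries into those lying in $L$ and those lying in $R$. One tiny slip: you justify that $m$ survives in $\del_r(\pi)$ by invoking $r\geq 1$, but what is actually needed is $r\leq n-1$ (the paper disposes of $r=n$ separately, both sides being empty); this is trivially patched.
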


\begin{proof}
It suffices to prove the case in which $t=1$; the general case will then follow by induction on $t$. The proof is trivial if $n\leq 1$, so we may assume $n\geq 2$ and proceed by induction on $n$. If $r=n$, then $s(\del_r(\pi))$ and $\del_r(s(\pi))$ are both empty. Thus, we may assume $0\leq r\leq n-1$. Write $\pi=LmR$, where $m$ is the largest entry in $\pi$. Among the $r$ smallest entries in $\pi$, let $r_L$ (respectively, $r_R$) be the number that lie in $L$ (respectively, $R$). Using the recursive definition of the stack-sorting map and our inductive hypothesis, we find that \[s(\del_r(\pi))=s(\del_{r_L}(L)m\del_{r_R}(R))=s(\del_{r_L}(L))s(\del_{r_R}(R))m=\del_{r_L}(s(L))\del_{r_R}(s(R))m\] \[=\del_r(s(L)s(R)m)=\del_r(s(\pi)). \qedhere\] 
\end{proof}

We say two entries $b,a$ in a permutation $\pi$ form a \emph{$21$ pattern} if $b$ appears to the left of $a$ in $\pi$ and $a<b$. We say three entries $b,c,a$ in $\pi$ form a \emph{$231$ pattern} if they appear in the order $b,c,a$ (from left to right) in $\pi$ and satisfy $a<b<c$. The next lemma follows immediately from either definition of the stack-sorting map; it is Lemma 4.2.2 in \cite{West}. 

\begin{lemma}\label{Lem3}
Let $\pi$ be a permutation. Two entries $b,a$ form a $21$ pattern in $s(\pi)$ if and only if there exists an entry $c$ such that $b,c,a$ form a $231$ pattern in $\pi$.  
\end{lemma}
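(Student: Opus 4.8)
The natural plan is to induct on $n$, the length of $\pi$, exploiting the recursive description $s(\pi)=s(L)\,s(R)\,m$, where $\pi=LmR$ and $m$ is the largest entry of $\pi$. When $n\leq 1$, the permutation $s(\pi)$ admits no $21$ pattern and $\pi$ admits no $231$ pattern, so the biconditional holds vacuously; this is the base case. Assume $n\geq 2$ and that the lemma is known for all permutations shorter than $\pi$ (note that $L$ and $R$ each have length at most $n-1$). A useful preliminary remark is that $m$ can never be one of the two entries of a $21$ pattern in $s(\pi)$, since it is the last and largest entry of $s(\pi)$; likewise $m$ can be neither the ``$2$'' nor the ``$1$'' of a $231$ pattern in $\pi$, since it is the largest entry of $\pi$ (it can, however, be the apex). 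Hence in both directions we may assume that $b$ and $a$ occur among the entries of $L$ or of $R$.

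For the forward implication, suppose $b$ and $a$ form a $21$ pattern in $s(\pi)=s(L)\,s(R)\,m$, so that $a<b$ and $b$ precedes $a$. Since $b$ precedes $a$, it cannot happen that $b$ lies in the block $s(R)$ while $a$ lies in the block $s(L)$, which leaves three cases. If $b$ and $a$ both lie in $s(L)$, then they form a $21$ pattern in $s(L)$, and the inductive hypothesis applied to $L$ furnishes an entry $c$ for which $b,c,a$ is a $231$ pattern in $L$; as $L$ is a prefix of $\pi$, this remains a $231$ pattern in $\pi$. The case in which both lie in $s(R)$ is identical, with $R$ in place of $L$. In the remaining case $b$ lies in $s(L)$ and $a$ lies in $s(R)$, so $b$ occurs in $L$ and $a$ occurs in $R$; then $b$, $m$, $a$ occur in this left-to-right order in $\pi$, and since $b\neq m$ we have $a<b<m$, so $b,m,a$ is a $231$ pattern in $\pi$ and we may take $c=m$.

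For the reverse implication, suppose $b,c,a$ form a $231$ pattern in $\pi$, so that $a<b<c$ and they occur in that order; we must show that $b$ precedes $a$ in $s(\pi)$. If $b$ occurs in $L$ and $a$ occurs in $R$, this is automatic, because every entry of the block $s(L)$ precedes every entry of the block $s(R)$ in $s(\pi)$. If $a$ occurs in $L$, then $b$ and $c$, which occur before $a$ in $\pi$, must also occur in $L$, so $b,c,a$ is a $231$ pattern in $L$; the inductive hypothesis then gives that $b$ precedes $a$ in $s(L)$, hence in $s(\pi)$. Symmetrically, if $b$ occurs in $R$, then $c$ and $a$, which occur after $b$, also lie in $R$, and we invoke the inductive hypothesis for $R$. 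These three cases are exhaustive, since $b$ cannot occur in $R$ while $a$ occurs in $L$.

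I do not expect a genuine obstacle here: the argument is a routine structural induction, and its only substantive point is the bookkeeping of which of $L$, $R$, and $m$ contains each of $b$, $c$, $a$—in particular, recognizing in the forward direction that a $21$ pattern of $s(\pi)$ straddling the boundary between the $s(L)$ and $s(R)$ blocks corresponds exactly to the $231$ pattern of $\pi$ whose apex is the current maximum $m$. One could instead argue directly from the stack-based definition by tracking the relative order in which $b$, the entry whose arrival pops $b$ off the stack, and $a$ enter the stack, but the recursive definition keeps the casework cleaner.
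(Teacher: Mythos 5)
Your proof is correct. The paper itself does not prove Lemma~\ref{Lem3} but simply cites it as Lemma~4.2.2 of West's thesis, remarking that it ``follows immediately from either definition of the stack-sorting map''; your structural induction on the recursive decomposition $\pi = LmR$, $s(\pi) = s(L)\,s(R)\,m$ is exactly the kind of routine argument the paper is alluding to, and the case analysis (including the key observation that a $21$ pattern straddling the $s(L)$/$s(R)$ boundary corresponds to a $231$ pattern with apex $m$) is complete and accurate.
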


The next lemma is also an easy consequence of the definition of $s$. 

\begin{lemma}\label{Lem4}
Let $\pi$ be a permutation whose smallest entry is $a$, and write $\pi=LaR$. The entries to the right of $a$ in $s(\pi)$ are the entries in $R$ and the right-to-left maxima of $L$. 
\end{lemma}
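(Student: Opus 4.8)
The plan is to deduce this from Lemma~\ref{Lem3}, which characterizes the $21$ patterns of $s(\pi)$ in terms of the $231$ patterns of $\pi$. Fix an entry $b$ of $\pi$ with $b\neq a$. Since $s(\pi)$ has the same entries as $\pi$, the entry $a$ is also the smallest entry of $s(\pi)$, so $b$ lies to the right of $a$ in $s(\pi)$ if and only if $b$ and $a$ do \emph{not} form a $21$ pattern in $s(\pi)$. By Lemma~\ref{Lem3}, this happens precisely when there is no entry $c$ such that $b,c,a$ form a $231$ pattern in $\pi$. So it suffices to show that such a $c$ exists if and only if $b$ is an entry of $L$ that is not a right-to-left maximum of $L$.

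To do this I would simply unpack what it means for such a $c$ to exist. Because $a$ is the smallest entry of $\pi$, the inequalities $a<b$ and $a<c$ are automatic, so $b,c,a$ form a $231$ pattern exactly when $b$ appears to the left of $c$, $c$ appears to the left of $a$, and $b<c$. Since the entries appearing to the left of $a$ in $\pi=LaR$ are precisely the entries of $L$, such a $c$ exists if and only if $b\in L$ and some entry of $L$ lying to the right of $b$ exceeds $b$, i.e.\ if and only if $b\in L$ and $b$ is not a right-to-left maximum of $L$. Negating, $b$ lies to the right of $a$ in $s(\pi)$ if and only if $b\notin L$ or $b$ is a right-to-left maximum of $L$; and since every entry of $\pi$ other than $a$ belongs to exactly one of $L$ and $R$, this says exactly that $b$ is an entry of $R$ or a right-to-left maximum of $L$, which is the claim.

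There is no genuine obstacle here; the only things needing care are the trivial boundary cases ($L$ or $R$ empty) and the elementary bookkeeping that in $\pi=LaR$ the entries to the left of $a$ are exactly those of $L$. If one prefers to argue directly from the stack-based definition, one can instead note that immediately after the procedure has read all of $L$ the stack contains exactly the right-to-left maxima of $L$ — a quick induction on $|L|$, using that reading an entry $x$ outputs exactly the stack entries smaller than $x$ and then pushes $x$ — after which $a$, being the smallest entry overall, is pushed on top without popping anything and is therefore output before every entry beneath it; hence the entries output after $a$ are precisely those still in the stack at that moment (the right-to-left maxima of $L$) together with those still in the input (the entries of $R$).
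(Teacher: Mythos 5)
Your main argument is correct and is essentially the paper's own proof: deduce from Lemma~\ref{Lem3} that $b$ lies right of $a$ in $s(\pi)$ iff there is no $c$ making $b,c,a$ a $231$ pattern, then use that $a$ is globally smallest to reduce this to $b\in R$ or $b$ being a right-to-left maximum of $L$ (the paper states this more tersely but it is the same reasoning). Your alternative stack-based argument at the end is also correct and is a genuinely more elementary route, but it isn't needed.
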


\begin{proof}
An entry $b$ appears to the left of $a$ in $s(\pi)$ if and only if $b,a$ form a $21$ pattern in $s(\pi)$. By Lemma \ref{Lem3}, this occurs if and only if there exists an entry $c$ in $\pi$ such that $b,c,a$ form a $231$ pattern in $\pi$. This occurs if and only if $b$ is not in $R$ and is not a right-to-left maximum of $L$. 
\end{proof}

Consider a permutation $\pi$ whose entries are all positive. Let $\pi 0$ be the concatenation of $\pi$ with the new entry $0$. Define $\sd'(\pi)$ to be the smallest positive integer $t$ such that $0$ is in the first position of $s^t(\pi 0)$. Let \[\mathcal D_n'=\frac{1}{n!}\sum_{\pi\in S_n}\sd'(\pi).\] We are going to see that this new quantity $\mathcal D_n'$ is very close to $\mathcal D_n$; it will have the advantage of being much easier to analyze. 

\begin{lemma}\label{Lem7}
For each permutation $\pi=\pi_1\cdots\pi_n$ with positive entries, we have $\sd'(\pi)=\sd(\pi 0)$. 
\end{lemma}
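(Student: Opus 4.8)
The plan is to show that appending a $0$ to $\pi$ and tracking when that $0$ reaches the front of the word is essentially the same process as tracking how many iterations of $s$ are needed to sort $\pi 0 \in S_{n+1}$ (after renormalization) into the identity. The key observation is that $0$ is the smallest entry of $\pi 0$, so Lemma~\ref{Lem4} controls exactly which entries sit to the left of $0$ after one application of $s$: writing $\pi 0 = \pi 0$ with empty ``$R$'' part, the entries left of $0$ in $s(\pi 0)$ are precisely the right-to-left maxima of $\pi$ that are not... in fact, since there is nothing to the right of $0$, the entries to the left of $0$ in $s(\pi 0)$ are exactly the right-to-left maxima of $\pi$. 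Iterating, the word to the left of $0$ after $t$ steps is governed by $s$ acting on successively shorter prefixes.

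More precisely, first I would establish that for every $t \geq 0$, if we write $s^t(\pi 0) = P_t\, 0\, Q_t$ (using that $0$ stays the minimum entry, so it has a well-defined position), then $Q_t$ consists exactly of the entries that have already been ``passed'' and, crucially, $s^t(\pi 0) = 1 2 \cdots$-sorted relative to $0$ exactly when $P_t$ is empty. The heart of the matter is that $s$ applied to $\pi 0$ behaves, on the portion left of $0$, like $s$ applied to the prefix alone together with a deletion of the new right-to-left maxima: one can check that $s^t(\pi 0)$ has $0$ in the first position if and only if $s^{t}$ has ``consumed'' all of $\pi$, which by Lemma~\ref{Lem2} (with $r$ chosen appropriately, comparing $\del$ of the two sides) and Lemma~\ref{Lem3} is equivalent to $s^t(\pi 0)$ having no $21$ pattern with bottom entry $0$, i.e.\ no entry to the left of $0$. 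Meanwhile, $\sd(\pi 0)$ is by definition the least $t$ with $s^t(\pi 0)$ increasing; since $0$ is the minimum, $s^t(\pi 0)$ is increasing if and only if $0$ is in the first position \emph{and} the remaining word $Q_t$ is increasing, so a priori $\sd(\pi 0) \geq \sd'(\pi)$.

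The remaining direction — that $\sd(\pi 0) \leq \sd'(\pi)$, i.e.\ that once $0$ reaches the front, the rest is already sorted — is where I expect the real work to lie. The point is that the suffix of $s^t(\pi 0)$ to the right of $0$ is, by repeated use of Lemma~\ref{Lem4}, built from the entries of $\pi$ in the order they get ``emitted'', and one must argue that at the very step $t = \sd'(\pi)$ when the last entry of the prefix is emitted past $0$, the accumulated suffix is already increasing. I would prove this by induction on $n$ using the recursive decomposition $\pi = LmR$: applying $s$ once gives $s(\pi 0) = s(L)\,s(R\,0)\cdots$ hmm — more carefully, since $0$ is smallest, after one step the structure of $s(\pi 0)$ is dictated by where $0$ sits, and one can relate $\sd'(\pi)$ to $\sd'$ of shorter permutations obtained by stripping right-to-left maxima, then invoke Lemma~\ref{Lem2} to see that sorting the prefix and sorting everything happen in lockstep.

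Alternatively, and perhaps more cleanly, I would note that $\del_1(s^t(\pi 0)) = s^t(\del_1(\pi 0)) = s^t(\pi)$ by Lemma~\ref{Lem2}, so $s^t(\pi 0)$ is increasing $\iff$ $0$ is in the first position of $s^t(\pi 0)$ \emph{and} $s^t(\pi)$ is increasing. This reduces the claim to showing that $\sd'(\pi) \geq \sd(\pi)$, i.e.\ that $0$ fails to reach the front until after $\pi$ itself is sorted. That last inequality should follow from Lemma~\ref{Lem4}: as long as $s^{t}(\pi)$ is not increasing, $s^t(\pi 0)$ has some entry forming a $21$ pattern, hence by Lemma~\ref{Lem3} a $231$ pattern persists in the prefix of $s^{t-1}(\pi 0)$ with $0$ as its bottom, keeping $0$ off the front — so the main obstacle is making this persistence argument precise and handling the bookkeeping of which entries lie left of $0$ at each stage, which is exactly what Lemmas~\ref{Lem2}, \ref{Lem3}, and~\ref{Lem4} are designed to supply.
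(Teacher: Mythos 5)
Your reduction in the last paragraph is correct and worth keeping: since $\del_1(s^t(\pi 0)) = s^t(\pi)$ by Lemma~\ref{Lem2}, and since $0$ is the global minimum, $s^t(\pi 0)$ is increasing if and only if $0$ is in the first position of $s^t(\pi 0)$ \emph{and} $s^t(\pi)$ is increasing, so $\sd(\pi 0) = \max(\sd'(\pi),\,\sd(\pi))$, and the lemma is equivalent to the inequality $\sd'(\pi)\geq\sd(\pi)$. The easy direction $\sd(\pi 0)\geq\sd'(\pi)$ you also have right.

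The gap is that you never actually establish $\sd'(\pi)\geq\sd(\pi)$, and your sketch of how it ``should follow'' does not go through as stated. You argue that if $s^t(\pi)$ is not increasing, then $s^t(\pi 0)$ has a $21$ pattern, and then appeal to Lemma~\ref{Lem3} to say a $231$ pattern with $0$ as its bottom ``persists'' in $s^{t-1}(\pi 0)$. But the $21$ pattern you obtain from $s^t(\pi)$ being non-increasing is some $b,a$ among the nonzero entries, not a $21$ pattern of the form $b,0$; and to conclude that $0$ is not first in $s^t(\pi 0)$ you need precisely a $21$ pattern whose bottom is $0$, which via Lemma~\ref{Lem3} requires a $231$ pattern $b,c,0$ in $s^{t-1}(\pi 0)$. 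There is nothing in your argument that produces such a pattern. (Note also that the direction of time is off: Lemma~\ref{Lem3} converts a $21$ in $s(\tau)$ into a $231$ in $\tau$, so it lets you go from $s^t$ back to $s^{t-1}$, but it does not let you turn an arbitrary $21$ into a $21$ with bottom $0$.)

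What is actually needed — and what the paper proves — is a strengthening you gesture at in your second paragraph but do not nail down: the suffix $Q_t$ of $s^t(\pi 0)$ lying strictly to the right of $0$ is \emph{increasing} for every $t\geq 0$. This is proved by induction on $t$ using Lemma~\ref{Lem4}: writing $s^{t-1}(\pi 0)=L\,0\,R$ with $R$ increasing, the entries that end up to the right of $0$ in $s^t(\pi 0)$ are the entries of $R$ together with the right-to-left maxima of $L$; the former are increasing, the latter decrease from left to right in $L$, and one checks by cases that no two of these entries can serve as the ``2'' and the ``1'' of a $231$ pattern in $L\,0\,R$, so by Lemma~\ref{Lem3} they form no $21$ pattern in $s^t(\pi 0)$. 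Once this invariant is in hand, both your desired inequality and the lemma itself follow immediately: at the moment $0$ first reaches position one, everything to its right is already increasing, hence $s^{\sd'(\pi)}(\pi 0)$ is the identity. Without this invariant (or an equivalent), the hard direction is not established by what you have written.
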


\begin{proof}
We claim that for every $t\geq 0$, the entries to the right of $0$ in $s^t(\pi0)$ appear in increasing order. The claim is vacuously true for $t=0$ because there are no entries to the right of $0$ in $\pi 0$. Now let $t\geq 1$, and suppose we know that the entries to the right of $0$ in $s^{t-1}(\pi 0)$ are in increasing order. In other words, we can write $s^{t-1}(\pi 0)=L0R$, where $R$ is increasing. According to Lemma~\ref{Lem4}, the entries to the right of $0$ in $s^t(\pi 0)$ are the entries in $R$ and the right-to-left maxima of $L$. The right-to-left maxima of $L$ are in decreasing order in $s^{t-1}(\pi 0)$, while the entries in $R$ are in increasing order. Thus, no two of these entries can form the first and third entries in a $231$ pattern in $s^{t-1}(\pi 0)$. By Lemma \ref{Lem3}, no two of these entries form a $21$ pattern in $s^t(\pi 0)$. This proves the claim, and the proof of the lemma follows. 
\end{proof}

To get a better understanding of the statistic $\sd'$, we introduce the following (admittedly dense) notation. An \emph{ordered set partition} of a set $\mathcal E$ of positive integers is a tuple $\mathcal B=(B_1,\ldots,B_r)$ of pairwise-disjoint nonempty sets $B_1,\ldots,B_r$ such that $\bigcup_{i=1}^rB_i=\mathcal E$. We say $\mathcal B$ is \emph{in standard form} if $\max B_1>\cdots>\max B_r$. We make the convention that the empty tuple $()$ is an ordered set partition of $\emptyset$ in standard form. Let $\mathcal M(\mathcal B)=\{\max B_i:1\leq i\leq r\}$ be the set of maximum elements of the sets in $\mathcal B$. By convention, $\mathcal M(())=\emptyset$. We are going to form a new ordered set partition $\eta(\mathcal B)$, which will be in standard form. Begin by forming the new tuple $\widehat{\mathcal B}=(\widehat B_1,\ldots,\widehat B_r)$, where $\widehat B_i=B_i\setminus\{\max B_i\}$. If all of the sets $\widehat B_i$ are empty, we simply define $\eta(\mathcal B)=()$. Now assume that at least one of the sets $\widehat B_i$ is nonempty. Let $J$ be the set of indices $j$ such that $\max\widehat B_j>\max\widehat B_i$ for all $i\in\{j+1,\ldots,r\}$ (where $\max\emptyset=-\infty$ by convention). We can write $J=\{j_1<\cdots<j_h\}$. For each $\ell\in\{1,\ldots,h\}$, let $\displaystyle B_\ell'=\bigcup_{j_{\ell-1}<i\leq j_\ell}\widehat B_i$ (where $j_0=0$). Now let $\eta(\mathcal B)$ be the tuple obtained from $(B_1',\ldots,B_h')$ by removing any occurrences of $\emptyset$. The tuple $\eta(\mathcal B)$ is an ordered set partition in standard form. 

\begin{example}\label{Exam2}
Let $\mathcal E=\{1,\ldots,12\}$, and let $\mathcal B=(\{9,12\},\{6,11\},\{1,4,10\},\{7,8\},\{2,5\},\{3\})$. Note that $\mathcal B$ is an ordered set partition in standard form. We have $\mathcal M(\mathcal B)=\{3,5,8,10,11,12\}$. Removing the elements of $\mathcal M(\mathcal B)$ from the sets in $\mathcal B$ yields the tuple $\widehat{\mathcal B}=(\{9\},\{6\},\{1,4\},\{7\},\{2\},\emptyset)$. Now, $J=\{1,4,5,6\}$ (so $h=4$). We have $(B_1',B_2',B_3',B_4')=(\{9\},\{1,4,6,7\},\{2\},\emptyset)$, so $\eta(\mathcal B)=(\{9\},\{1,4,6,7\},\{2\})$. \hspace*{\fill}$\lozenge$ 
\end{example}

Now take a permutation $\pi=\pi_1\cdots\pi_n$ with positive entries, and let $\mathcal E(\pi)=\{\pi_1,\ldots,\pi_n\}$ be the set of entries in $\pi$. Let $\pi_{i_1}>\cdots>\pi_{i_r}$ be the right-to-left maxima of $\pi$ (so $i_1<\cdots<i_r$). Let $\mathscr B_\ell(\pi)=\{\pi_i: i_{\ell-1}<i\leq i_\ell\}$ be the set of entries in $\pi$ that lie strictly to the right of $\pi_{i_{\ell-1}}$ and weakly to the left of $\pi_{i_{\ell}}$ (with the convention $i_0=0$). The tuple $\mathcal B_1(\pi)=(\mathscr B_1(\pi),\ldots,\mathscr B_r(\pi))$ is an ordered set partition of the set $\mathcal E(\pi)$ in standard form. Let $\mathcal M_1(\pi)=\mathcal M(\mathcal B_1(\pi))$. Note that $\mathcal M_1(\pi)$ is just the set of right-to-left maxima of $\pi$. Now let $\mathcal B_2(\pi)=\eta(\mathcal B_1(\pi))$ and $\mathcal M_2(\pi)=\mathcal M(\mathcal B_2(\pi))$. In general, define $\mathcal B_\ell(\pi)=\eta(\mathcal B_{\ell-1}(\pi))$ and $\mathcal M_\ell(\pi)=\mathcal M(\mathcal B_\ell(\pi))$. Note that there exists some integer $t$ such that $\mathcal B_{\ell}(\pi)=()$ and $\mathcal M_{\ell}(\pi)=\emptyset$ for all $\ell\geq t+1$. We will see that the smallest such integer $t$ is $\sd'(\pi)$. 

\begin{example}\label{Exam1}
Suppose $\pi=9\,12\,6\,11\,4\,1\,10\,7\,8\,2\,5\,3$. The right-to-left maxima of $\pi$ are the entries $12,11,10,8,5,3$, so \[\mathcal B_1(\pi)=(\mathscr B_1(\pi),\ldots,\mathscr B_6(\pi))=(\{9,12\},\{6,11\},\{1,4,10\},\{7,8\},\{2,5\},\{3\})\] and $\mathcal M_1(\pi)=\{3,5,8,10,11,12\}$. We saw in Example \ref{Exam2} that \[\mathcal B_2(\pi)=\eta(\mathcal B_1(\pi))=(\{9\},\{1,4,6,7\},\{2\}).\] Thus, $\mathcal M_2(\pi)=\mathcal M(\mathcal B_2(\pi))=\{2,7,9\}$. We can now compute $\mathcal B_3(\pi)=\eta(\mathcal B_2(\pi))=(\{1,4,6\})$, $\mathcal M_3(\pi)=\{6\}$, $\mathcal B_4(\pi)=(\{1,4\})$, $\mathcal M_4(\pi)=\{4\}$, $\mathcal B_5(\pi)=(\{1\})$, and $\mathcal M_5(\pi)=\{1\}$. Finally, we have $\mathcal B_\ell(\pi)=()$ and $\mathcal M_\ell(\pi)=\emptyset$ for all $\ell\geq 6$.  
\hspace*{\fill}$\lozenge$ 
\end{example} 

Lemma \ref{Lem4} tells us that $\mathcal M_1(\pi)$ is precisely the set of entries that move to the right of $0$ when we apply $s$ to $\pi 0$. This means that we can write $s(\pi 0)=L0R$, where $R$ consists of the entries in $\mathcal M_1(\pi)$. It is straightforward to verify from the definition of $s$ that $\mathcal M_2(\pi)$ is the set of right-to-left maxima of $L$. Applying Lemma \ref{Lem4} again, we see that $\mathcal M_2(\pi)$ is the set of entries that move to the right of $0$ when we apply $s$ to $s(\pi 0)$. Continuing this line of reasoning, we see that $\mathcal M_\ell(\pi)$ is the set of entries that move to the right of $0$ when we apply $s$ to $s^{\ell-1}(\pi 0)$. This proves that $\sd'(\pi)$ is the smallest integer $t$ such that $\mathcal M_{t+1}(\pi)=\emptyset$. Equivalently, it is the smallest integer $t$ such that $\mathcal B_{t+1}(\pi)=()$. Note that the sets $\mathcal M_1(\pi),\ldots,\mathcal M_{\sd'(\pi)}(\pi)$ form a partition of the set $\mathcal E(\pi)$ of entries of $\pi$. This allows us to describe $\sd'(\pi)$ as the smallest integer $t$ such that $\sum_{\ell=1}^t|\mathcal M_\ell(\pi)|=n$, where $n$ is the number of entries in $\pi$. 

\begin{lemma}\label{Lem8}
If $\pi=\pi_1\cdots\pi_n$ is a permutation with positive entries and $t$ is a positive integer, then $s^t(\pi)$ is of the form $LR$, where $R$ is the increasing permutation of the set $\bigcup_{i=1}^t\mathcal M_i(\pi)$. The set of right-to-left maxima of $L$ is $\mathcal M_{t+1}(\pi)$.  
\end{lemma}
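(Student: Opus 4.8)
The plan is to deduce the lemma from the analysis of $s^t(\pi 0)$ carried out in the paragraph preceding the statement, after transporting that analysis from $\pi 0$ back to $\pi$. For $\ell\geq 0$, write $s^\ell(\pi 0)=L^{(\ell)}\,0\,R^{(\ell)}$, where $L^{(\ell)}$ and $R^{(\ell)}$ are the (possibly empty) strings of entries lying to the left and to the right of $0$. I first want two facts: for every $\ell\geq 0$, the set of entries of $R^{(\ell)}$ is $\bigcup_{i=1}^{\ell}\mathcal M_i(\pi)$, and the set of right-to-left maxima of $L^{(\ell)}$ is $\mathcal M_{\ell+1}(\pi)$. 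Since $0$ is the smallest entry of $s^\ell(\pi 0)$, Lemma \ref{Lem4} (applied to $s^\ell(\pi 0)$) shows that the entries to the right of $0$ in $s^{\ell+1}(\pi 0)$ are precisely those of $R^{(\ell)}$ together with the right-to-left maxima of $L^{(\ell)}$; in particular no entry ever crosses back to the left of $0$, and the set of entries newly sent to the right is the set of right-to-left maxima of $L^{(\ell)}$, which the discussion preceding the lemma identifies with $\mathcal M_{\ell+1}(\pi)$ (the one point there left as ``straightforward to verify'', which I return to below). Granting this, both facts follow by an immediate induction on $\ell$.

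I would then pass from $\pi 0$ to $\pi$. The case $r=1$ of Lemma \ref{Lem2}, applied to the permutation $\pi 0$ (whose smallest entry is $0$), gives $s^t(\pi)=s^t(\del_1(\pi 0))=\del_1(s^t(\pi 0))$. By the claim proved inside the proof of Lemma \ref{Lem7}, the entries of $R^{(t)}$ appear in increasing order, so deleting the entry $0$ from $s^t(\pi 0)=L^{(t)}\,0\,R^{(t)}$ yields $s^t(\pi)=L^{(t)}R^{(t)}$ with $R^{(t)}$ increasing. Taking $L=L^{(t)}$ and $R=R^{(t)}$ and invoking the two facts above, $R$ is the increasing permutation of $\bigcup_{i=1}^{t}\mathcal M_i(\pi)$ and the set of right-to-left maxima of $L$ is $\mathcal M_{t+1}(\pi)$, which is exactly the assertion of the lemma.

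The only step with genuine content is the claim, asserted without proof in the discussion, that the set of entries moved past $0$ when $s$ is applied to $s^\ell(\pi 0)$ equals $\mathcal M_{\ell+1}(\pi)$; this is where I expect the work to be. I would make it precise by strengthening the induction so as to track not merely the right-to-left maxima of $L^{(\ell)}$ but the entire ordered set partition of $\mathcal E(L^{(\ell)})$ into the blocks delimited by the right-to-left maxima of $L^{(\ell)}$ (exactly as in the definition of $\mathcal B_1$), written in standard form, and show that this ordered set partition is $\mathcal B_{\ell+1}(\pi)$. The base case $\ell=0$ is the definition of $\mathcal B_1(\pi)$. For the inductive step one expands $s(L^{(\ell)}\,0\,R^{(\ell)})$ using the recursive rule $s(LmR)=s(L)\,s(R)\,m$ and checks that the operation $\eta$ was constructed to mirror precisely this: removing each block maximum $\max B_i$ reflects that right-to-left maximum being annexed to the output (and so landing past $0$), while the regrouping of the truncated blocks $\widehat B_i$ prescribed by the index set $J$ records how the surviving entries are reorganized into the blocks of $L^{(\ell+1)}$ cut out by \emph{its} right-to-left maxima. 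This bookkeeping is elementary but is the technically fussy part of the argument; everything else reduces to Lemmas \ref{Lem2}, \ref{Lem4}, and \ref{Lem7}.
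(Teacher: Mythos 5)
Your proposal is correct and follows essentially the same route as the paper: reduce the lemma about $s^t(\pi)$ to the analysis of $s^t(\pi 0)=L\,0\,R$ via Lemma \ref{Lem2} with $r=1$, obtain that $R$ is increasing from the claim in the proof of Lemma \ref{Lem7}, and identify the entries of $R$ and the right-to-left maxima of $L$ by tracking, with Lemma \ref{Lem4}, which entries cross $0$ at each iteration. The only difference is that you are more explicit in noting that the step the paper dismisses as ``straightforward to verify'' (that the right-to-left maxima of $L^{(\ell)}$ are exactly $\mathcal M_{\ell+1}(\pi)$) actually requires a strengthened induction tracking the full ordered set partition $\mathcal B_{\ell+1}(\pi)$, not merely the set of maxima; the paper absorbs that work into its preparatory discussion and into the phrase ``continuing this line of reasoning,'' while you correctly flag it as the place where the real bookkeeping lives.
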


\begin{proof}
We saw in the proof of Lemma \ref{Lem7} that the we can write $s^t(\pi 0)=L 0 R$, where $R$ is increasing. It follows from the above discussion that the set of entries appearing in $R$ is $\bigcup_{i=1}^t\mathcal M_i(\pi)$ and that the set of right-to-left maxima of $L$ is $\mathcal M_{t+1}(\pi)$. By Lemma \ref{Lem2}, we have $s^t(\pi)=\del_1(s^t(\pi 0))=LR$.  
\end{proof}

\begin{lemma}\label{Lem1}
For every positive integer $n$, we have $\mathcal D_{n+1}'\leq\mathcal D_n'+1$.
\end{lemma}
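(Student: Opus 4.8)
The plan is to compare $\mathcal D_{n+1}'$ with $\mathcal D_n'$ through the $(n+1)$-to-one map $S_{n+1}\to S_n$ that deletes the smallest entry. For each $\pi\in S_n$ there are exactly $n+1$ permutations $\sigma\in S_{n+1}$ for which $\del_1(\sigma)$ has the same relative order as $\pi$ (add $1$ to every entry of $\pi$ and insert the entry $1$ into one of the $n+1$ available gaps). Since $\sd'$ depends only on relative order, grouping the sum $\mathcal D_{n+1}'=\frac{1}{(n+1)!}\sum_{\sigma\in S_{n+1}}\sd'(\sigma)$ according to this map reduces the lemma to the pointwise estimate
\begin{equation*}
\sd'(\sigma)\le \sd'(\del_1(\sigma))+1\qquad\text{for every permutation }\sigma\text{ with positive entries;}
\end{equation*}
indeed, summing this inequality over the $n+1$ preimages of each $\pi$ and dividing by $(n+1)!$ yields $\mathcal D_{n+1}'\le\frac1{n!}\sum_{\pi\in S_n}(\sd'(\pi)+1)=\mathcal D_n'+1$.

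For the pointwise estimate I would let $a$ be the smallest entry of $\sigma$, put $\pi=\del_1(\sigma)$, and prove the identity $\mathcal M_\ell(\pi)=\mathcal M_\ell(\sigma)\setminus\{a\}$ for all $\ell\ge 1$, with the convention that $\mathcal M_\ell=\emptyset$ once the corresponding ordered set partition becomes empty. Granting this, write $u=\sd'(\pi)$, so that $\mathcal M_1(\pi),\dots,\mathcal M_u(\pi)$ partition the entries of $\pi$. Then $\mathcal M_1(\sigma),\dots,\mathcal M_u(\sigma)$ together contain every entry of $\sigma$ except possibly $a$, and $a$ lies in exactly one set $\mathcal M_{\ell_0}(\sigma)$ of the partition of $\mathcal E(\sigma)$ determined by $\sigma$. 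If $\ell_0\le u$ then already $\bigcup_{\ell\le u}\mathcal M_\ell(\sigma)=\mathcal E(\sigma)$, so $\sd'(\sigma)\le u$. If $\ell_0>u$ then $\mathcal M_\ell(\sigma)\subseteq\{a\}$ for every $\ell>u$; since $a\in\mathcal M_{\ell_0}(\sigma)$ only, this forces $\mathcal M_\ell(\sigma)=\emptyset$ for $u<\ell<\ell_0$, and because an empty ordered set partition stays empty we must have $\ell_0=u+1$ and hence $\sd'(\sigma)=u+1$. In either case $\sd'(\sigma)\le\sd'(\pi)+1$.

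It remains to establish the displayed identity, and this is where I expect the real work to lie. The base case $\ell=1$ is a direct check: since $a$ is the global minimum, removing $a$ from $\sigma$ does not change the right-to-left-maximum status of any other entry, so $\mathcal B_1(\pi)$ is obtained from $\mathcal B_1(\sigma)$ by deleting the element $a$ from the block that contains it, deleting that block only when it is the singleton $\{a\}$ (which happens precisely when $a$ is the last entry of $\sigma$). The inductive step reduces to a purely combinatorial claim about $\eta$: \emph{for any ordered set partition $\mathcal B$ in standard form whose minimum element is $a$, applying $\eta$ and then deleting the element $a$ gives the same ordered set partition as deleting $a$ first and then applying $\eta$.} Given this, $\mathcal B_\ell(\pi)$ is $\mathcal B_\ell(\sigma)$ with $a$ removed for all $\ell$; taking $\mathcal M(\cdot)$ and noting that deleting the global minimum never changes a block's maximum (unless the block is $\{a\}$, in which case $a$ simply disappears from the set of maxima) gives the identity. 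The main obstacle is the bookkeeping in this $\eta$-commutation claim: because $a$ is minimal, its block can only be the singleton $\{a\}$, necessarily the last block $B_r$, with $\widehat B_r=\emptyset$, so one must verify that discarding this trailing empty $\widehat B_r$ (or, when $a$ is not a singleton block, merely shrinking one $\widehat B_i$) alters neither the index set $J$ used to regroup the $\widehat B_i$ nor the merged blocks $B_\ell'$ after empty sets are removed — the delicate point being the vacuous clause in the definition of $J$ for the last index and the case analysis on whether the $\widehat B_i$ near the end are empty. The rest is routine, and Lemma~\ref{Lem2} offers a convenient consistency check, since $\del_2(s^t(\sigma 0))=s^t(\del_1(\sigma))$.
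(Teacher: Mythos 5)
Your overall reduction is identical to the paper's: both pass to the pointwise inequality $\sd'(\sigma)\le\sd'(\del_1(\sigma))+1$ via the $(n+1)$-to-one normalization map $S_{n+1}\to S_n$. The key fact both proofs rest on is also the same, namely that $\mathcal M_\ell(\del_1(\sigma))$ equals $\mathcal M_\ell(\sigma)$ with the smallest entry removed (you state the stronger exact identity for all $\ell$; the paper states the equivalent dichotomy only for $\ell\le\sd'(\del_1(\sigma))$, which is all it needs). Where you genuinely diverge is in how this fact is established. The paper gets it almost for free: Lemma~\ref{Lem8} identifies $\mathcal M_{\ell}(\pi)$ as the set of right-to-left maxima of the ``left part'' $L$ of $s^{\ell-1}(\pi)$, and Lemma~\ref{Lem2} says $s^{\ell-1}(\del_1(\pi))=\del_1(s^{\ell-1}(\pi))$, so one just has to observe how deleting a global minimum affects right-to-left maxima. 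You instead propose a direct inductive argument showing that $\eta$ commutes with deleting the global minimum. That claim is correct --- and your deduction of the pointwise bound from it (including the observation that $\sd'(\sigma)\in\{\sd'(\pi),\sd'(\pi)+1\}$, which is a bit more than the lemma needs) is sound --- but it requires exactly the delicate bookkeeping you flag: the vacuous clause making the last index always lie in $J$, the distinction between a block shrinking and vanishing, and the fact that the global minimum's block, if a singleton, is forced to be the last one. None of these steps fails, but you are essentially re-deriving by hand, at the level of ordered set partitions, the structural information that Lemmas~\ref{Lem2} and~\ref{Lem8} already encapsulate. The remark you make at the end --- that $\del_2(s^t(\sigma 0))=s^t(\del_1(\sigma))$ by Lemma~\ref{Lem2} --- is in fact the engine of the paper's cleaner route; promoting it from ``consistency check'' to the main tool would let you skip the $\eta$-commutation case analysis entirely.
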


\begin{proof}
Choose $\pi\in S_{n+1}$, and let $\widetilde\pi=\del_1(\pi)$. By Lemma \ref{Lem8} we can write $s^{\ell-1}(\pi)=LR$, where $R$ is a permutation of the set $\bigcup_{i=1}^{\ell-1}\mathcal M_i(\pi)$
\vspace{.05cm} and $\mathcal M_{\ell}(\pi)$ is the set of right-to-left maxima of $L$. Similarly, we can write $s^{\ell-1}(\widetilde\pi)=\widetilde L\widetilde R$, where $\widetilde R$ is a permutation of the set $\bigcup_{i=1}^{\ell-1}\mathcal M_i(\widetilde \pi)$ and $\mathcal M_{\ell}(\widetilde \pi)$ is the set of right-to-left maxima of $\widetilde L$. Lemma \ref{Lem2} tells us that $s^{\ell-1}(\widetilde\pi)=\del_1(s^{\ell-1}(\pi))$. It now follows (by induction on $\ell$) that for every $\ell\in\{1,\ldots,\sd'(\widetilde\pi)\}$, we have either $\mathcal M_\ell(\pi)=\mathcal M_\ell(\widetilde\pi)$ or $\mathcal M_\ell(\pi)=\mathcal M_\ell(\widetilde\pi)\cup\{1\}$. Consequently, $\sum_{\ell=1}^{\sd'(\widetilde\pi)}|\mathcal M_\ell(\pi)|\geq\sum_{i=1}^{\sd'(\widetilde\pi)}|\mathcal M_\ell(\widetilde\pi)|=n$. This shows that $\sum_{\ell=1}^{\sd'(\widetilde\pi)+1}|\mathcal M_\ell(\pi)|\geq n+1$, so $\sd'(\pi)\leq \sd'(\widetilde\pi)+1$. Letting $f(\pi)$ denote the normalization of $\widetilde\pi=\del_1(\pi)$, we see that $\sd'(\pi)\leq\sd'(f(\pi))+1$ for every $\pi\in S_{n+1}$. The map $f:S_{n+1}\to S_n$ is $(n+1)$-to-$1$, so \[\mathcal D_{n+1}'=\frac{1}{(n+1)!}\sum_{\pi\in S_{n+1}}\sd'(\pi)\leq\frac{1}{(n+1)!}\sum_{\pi\in S_{n+1}}(\sd'(f(\pi))+1)=\frac{1}{(n+1)!}\sum_{\sigma\in S_n}(\sd'(\sigma)+1)(n+1)\] \[\frac{1}{n!}\sum_{\sigma\in S_n}(\sd'(\sigma)+1)=\mathcal D_n'+1. \qedhere\] 
\end{proof} 

We are now in a position to prove the main proposition that will allow us to focus our attention on the numbers $\mathcal D_n'$ instead of the numbers $\mathcal D_n$; this will make our proofs much simpler. 

\begin{proposition}\label{Prop1}
We have \[\lim_{n\to\infty}\left(\frac{\mathcal D_n'}{n}-\frac{\mathcal D_n}{n}\right)=0.\]
\end{proposition}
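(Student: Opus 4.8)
The plan is to relate $\sd'(\pi)$ and $\sd(\pi)$ directly for $\pi\in S_n$, and then average. The natural comparison is between a permutation $\pi\in S_n$ and the permutation $\pi 0$ obtained by appending $0$; by Lemma~\ref{Lem7} we have $\sd'(\pi)=\sd(\pi 0)$, so the task reduces to comparing $\sd(\pi 0)$ (where $\pi 0$ is a permutation of the set $\{0,1,\ldots,n\}$, normalizing to an element of $S_{n+1}$ whose last entry is $1$) with $\sd(\pi)$ itself. First I would show that these two quantities differ by at most a bounded amount — ideally by at most $1$ in each direction. In one direction, if $s^t(\pi)$ is increasing then, since $0$ sits at the end of $\pi 0$ and the stack-sorting map only ever moves entries leftward past larger entries, one checks that $s^{t}(\pi 0)$ has all of $\pi$'s entries sorted with $0$ somewhere; a single further application of $s$ (or a short argument via Lemma~\ref{Lem4}, tracking the position of $0$) brings $0$ to the front, giving $\sd(\pi 0)\le \sd(\pi)+O(1)$, hence $\sd'(\pi)\le \sd(\pi)+O(1)$. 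In the other direction, Lemma~\ref{Lem2} with $r=1$ gives $s^t(\pi)=\del_1(s^t(\pi 0))$, so once $0$ has reached the front of $s^t(\pi 0)$ — i.e. once $t\ge \sd'(\pi)$ — the remaining entries form $s^t(\pi)$; the entries to the right of $0$ are increasing by the claim in the proof of Lemma~\ref{Lem7}, so $s^t(\pi 0)$ is increasing as soon as the block $L$ to the left of $0$ is, and one shows $L$ needs at most one more application. Thus $|\sd'(\pi)-\sd(\pi)|\le c$ for an absolute constant $c$ (I expect $c=1$).

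Granting such a pointwise bound, the Proposition is immediate: $\left|\mathcal D_n'-\mathcal D_n\right|=\frac{1}{n!}\left|\sum_{\pi\in S_n}(\sd'(\pi)-\sd(\pi))\right|\le c$, so $\left|\frac{\mathcal D_n'}{n}-\frac{\mathcal D_n}{n}\right|\le \frac{c}{n}\to 0$.

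If a clean pointwise bound with an absolute constant proves awkward (for instance if appending $0$ can in bad cases shift the depth by more than $1$), the fallback is a two-sided comparison of the averages. For the inequality $\mathcal D_n'\le \mathcal D_n+1$ one uses $\sd'(\pi)=\sd(\pi 0)$ together with the recursive structure: by Lemma~\ref{Lem8}, once we track which entries have joined the sorted suffix, appending a bottom element $0$ only delays the appearance of $1$ at the front, and an averaging argument like that in Lemma~\ref{Lem1} (where the map $f$ deleting the smallest entry is $(n+1)$-to-$1$) converts this into $\mathcal D_{n}'\le \mathcal D_{n-1}+1$ or similar, which combined with monotonicity-type estimates on $\mathcal D_n$ suffices. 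For the reverse inequality $\mathcal D_n\le \mathcal D_n'+O(1)$ one observes that $\sd(\pi)\le \sd'(\pi)$ whenever $\pi$ does not already end in a position that forces an extra step: since $s^{\sd'(\pi)}(\pi 0)$ has $0$ in front and increasing tail, $\del_1$ of it — which is $s^{\sd'(\pi)}(\pi)$ by Lemma~\ref{Lem2} — is increasing, so in fact $\sd(\pi)\le\sd'(\pi)$ always. Combining gives $\mathcal D_n\le\mathcal D_n'$ and $\mathcal D_n'\le\mathcal D_n+O(1)$, whence the limit is $0$ after dividing by $n$.

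The main obstacle is the upper bound $\sd'(\pi)\le\sd(\pi)+O(1)$: one must rule out the possibility that inserting a new global minimum at the end of $\pi$ dramatically increases the number of iterations needed. The cleanest route is probably to argue that for every $t$, the permutation $s^t(\pi 0)$ is obtained from $s^t(\pi)$ by inserting $0$ in the position immediately following the longest prefix of right-to-left-maxima-type structure — more precisely, to show by induction on $t$, using Lemmas~\ref{Lem2}, \ref{Lem3}, and \ref{Lem4}, that the entries of $s^t(\pi 0)$ strictly to the left of $0$ are exactly the entries of $s^t(\pi)$ that are not right-to-left maxima of $s^t(\pi)$ together with... in any case, pinning down exactly where $0$ lives in $s^t(\pi 0)$ relative to $s^t(\pi)$ will yield that $0$ reaches the front within one step of $s^t(\pi)$ becoming increasing. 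Once that structural fact is in hand the rest is the short averaging computation above.
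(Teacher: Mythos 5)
Your plan hinges on establishing a pointwise bound $\sd'(\pi)\le\sd(\pi)+O(1)$, and this is the step that fails --- badly. Consider the identity $\pi=12\cdots n$. Then $\sd(\pi)=0$, but iterating $s$ on $\pi\,0$ gives $s^k(\pi\,0)=12\cdots(n-k)\,0\,(n-k+1)\cdots n$ for $0\le k\le n$, so the entry $0$ advances by exactly one position per iteration and $\sd'(\pi)=n$. The structural reason is the one your sketch elides: if $s^t(\pi\,0)=L\,0\,R$ with $L$ increasing, then $L$ has a single right-to-left maximum, so Lemma~\ref{Lem4} says only \emph{one} entry crosses $0$ at the next step. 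Thus ``$L$ needs at most one more application'' is false, and the difference $\sd'(\pi)-\sd(\pi)$ can be as large as $n$. Once the pointwise bound collapses, the entire averaging step $\left|\mathcal D_n'-\mathcal D_n\right|\le c$ collapses with it.

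Your fallback does not save the argument either. An estimate of the shape $\mathcal D_n'\le\mathcal D_{n-1}'+1$ (which is exactly Lemma~\ref{Lem1}, and true) only gives $\mathcal D_n'\le\mathcal D_k'+(n-k)$, which is useless by itself: combined with $\mathcal D_n\ge 0$ it yields nothing, and combined with $\mathcal D_n\le\mathcal D_n'$ it only controls one side of the difference. What is needed is a lower bound on $\mathcal D_n$ in terms of $\mathcal D_n'$ that loses only $o(n)$. The paper gets this not by a pointwise comparison of $\sd$ and $\sd'$ on the \emph{same} permutation, but by a probabilistic reduction: for a random $\pi\in S_n$, let $M_\pi=\max_i e_i(\pi)$ where $e_i(\pi)$ counts entries of $\{i+1,\dots,n\}$ to the left of $i$. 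One extracts from $\pi$ a uniformly random $\sigma\in S_{M_\pi}$ (the normalization of the entries exceeding $a$ and to the left of $a$, where $a$ achieves the max) with $\sd'(\sigma)\le\sd(\pi)$. Since $M_\pi\ge n-2\sqrt n\log n$ with probability $1-o(1)$ (the Deutsch--Gessel--Callan distributional fact cited in the paper), and since $\mathcal D_k'\ge\mathcal D_n'-(n-k)$ by Lemma~\ref{Lem1}, this yields $\mathcal D_n\ge(1-o(1))(\mathcal D_n'-o(n))$, which together with $\mathcal D_n\le\mathcal D_n'$ gives the result. The part of your argument showing $\mathcal D_n\le\mathcal D_n'$ (via Lemma~\ref{Lem2} and the fact that $s^{\sd'(\pi)}(\pi\,0)=0\,1\,2\cdots n$) is correct and matches the paper; it is the other direction where a genuinely new idea --- and the concentration of $M_\pi$ near $n$ --- is required.
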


\begin{proof}
Let $\pi\in S_n$. Suppose $\pi 0$ is $t$-stack-sortable. Applying Lemma \ref{Lem2} with $r=1$ shows that $123\cdots n=\del_1(s^t(\pi 0))=s^t(\pi)$, so $\pi$ is $t$-stack-sortable. Along with Lemma \ref{Lem7}, this proves that $\sd'(\pi)=\sd(\pi 0)\geq\sd(\pi)$. As $\pi$ was arbitrary, we find that
\begin{equation}\label{Eq1}
\mathcal D_n'\geq\mathcal D_n.
\end{equation}

We now want to show that $\mathcal D_n$ is not too much less than $\mathcal D_n'$. Given $\tau\in S_n$, let $e_i(\tau)$ be the number of entries in $\{i+1,\ldots,n\}$ that lie to the left of $i$ in $\tau$. Let $M_\tau=\max\limits_{1\leq i\leq n}e_i(\tau)$. Fix $k\leq n-1$, and put $\mathfrak Q_{n,k}=\{\tau\in S_n:M_\tau=k\}$. Choose $\pi\in \mathfrak Q_{n,k}$ uniformly at random, and let $a$ be the smallest entry such that $e_a(\pi)=k$. Let $\sigma'$ be the subpermutation of $\pi$ consisting of entries in $\{a+1,\ldots,n\}$ that lie to the left of $a$, and let $\sigma\in S_k$ be the normalization of $\sigma'$. By definition, $\pi$ is $\sd(\pi)$-stack-sortable. Applying Lemma \ref{Lem2}, we find that $\del_{a-1}(\pi)$ is $\sd(\pi)$-stack-sortable. This means that after $\sd(\pi)$ iterations of the stack-sorting map, the entry $a$ in $\del_{a-1}(\pi)$ moves to the left of all of the entries of $\sigma'$. During each iteration of $s$, the number of positions that $a$ moves to the left does not depend on the order of the entries to the right of $a$ (by Lemma~\ref{Lem4}). Since $\sigma' a$ has the same relative order as $\sigma 0$, it follows that $0$ will be the first entry in $s^{\sd(\pi)}(\sigma 0)$. In other words, $\sd'(\sigma)\leq \sd(\pi)$. We chose $\sigma$ by first choosing $\pi$ uniformly at random from $\mathfrak Q_{n,k}$ and then normalizing a specific subpermutation of $\pi$. It is straightforward to check that each permutation in $S_k$ is equally likely to be chosen as $\sigma$. Therefore, the expected value of $\sd(\pi)$ when $\pi$ is chosen uniformly at random from $\mathfrak Q_{n,k}$ is at least the expected value of $\sd'(\sigma)$ when $\sigma$ is chosen uniformly at random from $S_k$; the latter expected value is precisely $\mathcal D_k'$. Consequently, \[\mathcal D_n=\frac{1}{n!}\sum_{k=0}^{n-1}\sum_{\pi\in\mathfrak Q_{n,k}}\sd(\pi)\geq\frac{1}{n!}\sum_{k=0}^{n-1}|\mathfrak Q_{n,k}|\mathcal D_k'\geq\frac{1}{n!}\sum_{k=K_n}^{n-1}|\mathfrak Q_{n,k}|\mathcal D_k'\geq\frac{1}{n!}\left(\min_{K_n\leq k\leq n-1}\mathcal D_k'\right)\sum_{k=K_n}^{n-1}|\mathfrak Q_{n,k}|,\] where $K_n=\left\lfloor n-2\sqrt{n}\log n\right\rfloor$. Let $K_n'=\left\lfloor n-\sqrt{n}\log n\right\rfloor$. It is known (see \cite{Deutsch2}) that \[\frac{1}{n!}\sum_{k=K_n}^{n-1}|\mathfrak Q_{n,k}|=1-\frac{1}{n!}(K_n-1)!K_n^{n-K_n+1}=1-\prod_{r=K_n}^{n}\frac{K_n}{r}\geq 1-\prod_{r=K_n'}^{n}\frac{K_n}{r}\] \[\geq 1-\left(\frac{K_n}{K_n'}\right)^{n-K_n'+1}=1-o(1).\] It follows from Lemma \ref{Lem1} that $\min\limits_{K_n\leq k\leq n-1}\mathcal D_k'\geq\mathcal D_n'-(n-K_n)=\mathcal D_n'-o(n)$. Consequently, $\mathcal D_n\geq (1-o(1))(\mathcal D_n'-o(n))$. Combining this with \eqref{Eq1} shows that \[0\leq\frac{\mathcal D_n'}{n}-\frac{\mathcal D_n}{n}\leq\frac{\mathcal D_n'}{n}-(1-o(1))\frac{\mathcal D_n'-o(n)}{n}=\frac{\mathcal D_n'}{n}o(1)+o(1).\] The desired result now follows from the fact that $\mathcal D_n'=O(n)$.  
\end{proof}

Now that we have proved the necessary lemmas, we can proceed to the proof of Theorem \ref{Thm1}.  

\subsection{Lower Bound}

Let $\mathfrak S_n$ denote the set of bijections from $[n]$ to $[n]$, which we write in disjoint cycle notation. Of course, $S_n$ and $\mathfrak S_n$ are just two different incarnations of the set of permutations of $[n]$. Let $\pi_{i_1},\ldots,\pi_{i_r}$ be the right-to-left maxima of a permutation $\pi\in S_n$, where $i_1<\cdots<i_r$. We denote by $\pi^{(\ell)}$ the subpermutation $\pi_{i_{\ell-1}+1}\pi_{i_{\ell-1}+2}\cdots\pi_{i_\ell}$ (with $i_0=0$). For example, if $\pi=6173542$, then $\pi^{(1)}=617$, $\pi^{(2)}=35$, $\pi^{(3)}=4$, and $\pi^{(4)}=2$. The entries in $\pi^{(\ell)}$ are precisely the elements of the set $\mathscr B_\ell(\pi)$. If we put parentheses around the subpermutations $\pi^{(1)},\ldots,\pi^{(r)}$, we obtain the disjoint cycle decomposition of an element of $\mathfrak S_n$. For example, the permutation $\pi=6173542\in S_7$ gives rise to $(6\,1\,7)(3\,5)(4)(2)\in\mathfrak S_7$. Foata's transition lemma (see \cite[page 109]{Bona}) asserts that this map is a bijection from $S_n$ to $\mathfrak S_n$. Thus, the distribution of sizes of the sets $\mathscr B_\ell(\pi)$ in a random permutation in $S_n$ is the same as the distribution of cycle lengths in a random element of $\mathfrak S_n$.    

The Golomb-Dickman constant $\lambda\approx 0.62433$ is defined by $\lambda=\lim\limits_{n\to\infty}\dfrac{\alpha_n}{n}$, where $\alpha_n$ is the expected length of the longest cycle in a bijection chosen uniformly at random from $\mathfrak S_n$. According to the above remarks, $\alpha_n$ is also the expected value of $\max\limits_{\ell\geq 1}|\mathscr B_\ell(\pi)|$ when $\pi\in S_n$ is chosen uniformly at random. Golomb \cite{Golomb} was the first to observe that the limit defining $\lambda$ exists because the sequence $(\alpha_n/n)_{n\geq 1}$ is monotonically decreasing. Llyod and Shepp \cite{Lloyd} proved that $\displaystyle\lambda=\int_0^1e^{\text{li}(x)}\,dx$, where $\displaystyle\text{li}(x)=\int_0^x\frac{dt}{\log t}$ is the logarithmic integral. 
 
\begin{proof}[Proof of the Lower Bound in Theorem \ref{Thm1}]
Let $\pi\in S_n$, and let $\mathscr B_i(\pi)$ be a set of maximum size in the tuple $\mathcal B_1(\pi)=(\mathscr B_1(\pi),\ldots,\mathscr B_r(\pi))$. Observe that each of the sets $\mathcal M_\ell(\pi)$ contains at most one element from $\mathscr B_i(\pi)$. Since the sets $\mathcal M_1(\pi),\ldots,\mathcal M_{\sd'(\pi)}(\pi)$ form a partition of $[n]$, it follows that $\sd'(\pi)\geq |\mathscr B_i(\pi)|$. If we choose $\pi$ uniformly at random from $S_n$, then the expected value of $\sd'(\pi)$ is at least the expected value of $|\mathscr B_i(\pi)|$. As mentioned above, the latter expected value is $\alpha_n$. In other words, $\mathcal D_n'\geq \alpha_n$. It now follows from Proposition \ref{Prop1} that \[\liminf_{n\to\infty}\frac{\mathcal D_n}{n}=\liminf_{n\to\infty}\frac{\mathcal D_n'}{n}\geq\lim_{n\to\infty}\frac{\alpha_n}{n}=\lambda. \qedhere\]  
\end{proof}

\subsection{Upper Bound}

Let $\mathcal B=(B_1,\ldots,B_r)$ be an ordered set partition in standard form. For $m\in\{1,\ldots,r-1\}$, let $E_m=\bigcup_{i=m+1}^rB_i$. We say the set $E_m$ is \emph{quarantined in $\mathcal B$} if $|B_m|\geq |E_m|$ and the $j^\text{th}$-largest element of $B_m$ is greater than the $j^\text{th}$-largest element of $E_m$ for all $1\leq j\leq |E_m|$. The terminology is motivated by imagining that we form the ordered set partitions $\eta(\mathcal B),\eta^2(\mathcal B),\ldots$. When we do this, it is possible that some of the elements of $\bigcup_{i=1}^mB_i$ will end up merging with elements from $E_m$. However, this will never happen if $E_m$ is quarantined in $\mathcal B$ (the elements of $E_m$ stay separated from the elements of $\bigcup_{i=1}^mB_i$ until they all disappear).  

\begin{lemma}\label{Lem5}
Let $\pi=\pi_1\cdots\pi_n$ be a permutation with positive entries, and let $\pi_{i_1}>\cdots>\pi_{i_r}$ be the right-to-left maxima of $\pi$. Let $\mathcal B_1(\pi)=(\mathscr B_1(\pi),\ldots,\mathscr B_r(\pi))$ be the ordered set partition obtained from $\pi$, and let $\mathscr E_m(\pi)=\bigcup_{i=m+1}^r\mathscr B_i(\pi)$. If $\mathscr E_m(\pi)$ is quarantined in $\mathcal B_1(\pi)$, then $\sd'(\pi)\leq i_m$.
\end{lemma}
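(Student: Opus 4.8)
The plan is to recast the whole statement in terms of the operator $\eta$ on ordered set partitions. Recall that $\sd'(\pi)$ is the least integer $t$ with $\mathcal B_{t+1}(\pi)=()$, i.e. the least $t$ with $\eta^t(\mathcal B_1(\pi))=()$, where $\mathcal B_1(\pi)=(\mathscr B_1(\pi),\dots,\mathscr B_r(\pi))$. Write $\mathcal A\,{}^\frown\,\mathcal A'$ for the concatenation of two ordered set partitions, and split $\mathcal B_1(\pi)=\mathcal B^L\,{}^\frown\,\mathcal B^R$ with $\mathcal B^L=(\mathscr B_1(\pi),\dots,\mathscr B_m(\pi))$ and $\mathcal B^R=(\mathscr B_{m+1}(\pi),\dots,\mathscr B_r(\pi))$; then $\bigcup\mathcal B^R=\mathscr E_m(\pi)$ and the last block of $\mathcal B^L$ is $\mathscr B_m(\pi)$. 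Say that a set $E$ is \emph{quarantined by} a set $A$ if $|A|\geq|E|$ and the $j$th-largest element of $A$ exceeds the $j$th-largest element of $E$ for all $1\leq j\leq|E|$; the hypothesis of the lemma is precisely that $\mathscr E_m(\pi)$ is quarantined by $\mathscr B_m(\pi)$. The crux is a Key Lemma asserting that $\eta$ respects such a splitting: if $\mathcal C=\mathcal C_L\,{}^\frown\,\mathcal C_R$ with $\mathcal C_R\neq()$ and $\bigcup\mathcal C_R$ quarantined by the last block of $\mathcal C_L$, then $\eta(\mathcal C)=\eta(\mathcal C_L)\,{}^\frown\,\eta(\mathcal C_R)$, and if moreover $\eta(\mathcal C_L)\neq()$ then $\bigcup\eta(\mathcal C_R)$ is quarantined by the last block of $\eta(\mathcal C_L)$.

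To prove the Key Lemma, write $\mathcal C_L=(C_1,\dots,C_p)$ and $\mathcal C_R=(D_1,\dots,D_q)$. If $\left|\bigcup_jD_j\right|\geq 2$, the quarantine condition forces $|C_p|\geq 2$, so $\widehat{C_p}\neq\emptyset$; since the second-largest element of $C_p$ exceeds the second-largest element of $\bigcup_jD_j$ and hence exceeds $\max\widehat{D_j}$ for each $j$, the index $p$ lies in the record set $J$ used to define $\eta(\mathcal C)$. Therefore no block of $\eta(\mathcal C)$ straddles the $C$--$D$ boundary, and a short check shows that $J$ restricted to the first $p$ indices is exactly the record set of $\mathcal C_L$ while $J$ on the last $q$ indices is that of $\mathcal C_R$; hence $\eta(\mathcal C)=\eta(\mathcal C_L)\,{}^\frown\,\eta(\mathcal C_R)$. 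Preservation of quarantine then follows because the last block of $\eta(\mathcal C_L)$ contains $\widehat{C_p}=C_p\setminus\{\max C_p\}$, whereas $\bigcup\eta(\mathcal C_R)=\bigcup_jD_j\setminus\{\max D_1,\dots,\max D_q\}$ is contained in $\left(\bigcup_jD_j\right)\setminus\left\{\max\bigcup_jD_j\right\}$, so all the relevant order statistics shift down by one compatibly. The remaining case $\left|\bigcup_jD_j\right|=1$ forces $\mathcal C_R=(\{x\})$ with $x<\max C_p$, so $\eta(\mathcal C_R)=()$; here one must verify the elementary but fiddly fact that appending a singleton block at the end of a standard-form ordered set partition does not change its image under $\eta$.

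Granting the Key Lemma, an induction on $k$ gives $\eta^k(\mathcal B_1(\pi))=\eta^k(\mathcal B^L)\,{}^\frown\,\eta^k(\mathcal B^R)$ for all $k\geq 0$ (the inductive step invokes the Key Lemma when $\eta^k(\mathcal B^R)\neq()$, a situation that one checks forces $\eta^k(\mathcal B^L)\neq()$, and is trivial otherwise). A concatenation of two ordered set partitions is $()$ exactly when both are, so $\sd'(\pi)$ equals the larger of the least $t$ with $\eta^t(\mathcal B^L)=()$ and the least $t$ with $\eta^t(\mathcal B^R)=()$. Each of these numbers is the value of $\sd'$ on a permutation realizing the corresponding ordered set partition (every ordered set partition in standard form is $\mathcal B_1$ of the permutation obtained by writing each block in increasing order), so each is at most the number of integers being permuted: a permutation $\rho$ of $k$ integers satisfies $\sd'(\rho)=\sd(\rho 0)\leq k$ by Lemma~\ref{Lem7} and the fact that every permutation of length $k+1$ is $k$-stack-sortable. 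For $\mathcal B^L$ this bound is $\sum_{\ell=1}^m|\mathscr B_\ell(\pi)|=i_m$; for $\mathcal B^R$ it is $|\mathscr E_m(\pi)|$, which is at most $|\mathscr B_m(\pi)|=i_m-i_{m-1}\leq i_m$ by the quarantine hypothesis. Hence $\sd'(\pi)\leq i_m$.

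I expect the Key Lemma to be the only genuine difficulty — both the combinatorial bookkeeping showing that the record set $J$ governing $\eta$ on the concatenation splits cleanly along the $C$--$D$ boundary, and the degenerate verification that $\eta$ ignores a singleton block appended at the end. Once $\sd'$ is re-expressed through iteration of $\eta$, everything else is essentially formal.
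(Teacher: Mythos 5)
Your proposal is correct, but it takes a genuinely different route from the paper's. Both arguments rest on the same core observation---that the quarantine condition propagates under iteration of $\eta$, so that the left part of $\mathcal B_1(\pi)$ keeps ``shielding'' $\mathscr E_m(\pi)$---but they exploit it differently. The paper uses it implicitly and briefly: letting $\ell$ be the last stage at which an element of $\mathscr E_m(\pi)$ survives, it notes that each of $\mathcal M_1(\pi),\ldots,\mathcal M_\ell(\pi)$ must pick up at least one element from $\bigcup_{i\leq m}\mathscr B_i(\pi)$ (precisely because the shield persists), so $\bigcup_{j\leq\ell}\mathcal M_j(\pi)$ contains $\mathscr E_m(\pi)$ plus at least $\ell$ additional elements; a cardinality count then bounds the number of nonempty $\mathcal M_j(\pi)$ with $j>\ell$ by $i_m-\ell$, yielding $\sd'(\pi)\leq i_m$. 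You instead prove a stronger structural factorization: your Key Lemma (whose two cases and quarantine-preservation clause I've checked) shows $\eta^k(\mathcal B_1(\pi))=\eta^k(\mathcal B^L)\,{}^\frown\,\eta^k(\mathcal B^R)$ for all $k$, hence $\sd'(\pi)=\max(t_L,t_R)$ exactly, after which both $t_L\leq i_m$ and $t_R\leq|\mathscr E_m(\pi)|\leq|\mathscr B_m(\pi)|\leq i_m$ follow from the trivial bound $\sd'(\rho)\leq|\rho|$. Your version requires more bookkeeping (one must verify the record set $J$ splits cleanly and handle the singleton degenerate case), but it is more transparent about the mechanism and in fact yields an exact formula for $\sd'(\pi)$ as a maximum of two smaller $\sd'$-values, which is genuinely more information than the paper extracts. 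Both are valid; the paper's counting argument is shorter, while yours is more conceptual.
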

\begin{proof}
Let $\ell$ be the largest integer such that one of the sets in $\mathcal B_\ell(\pi)=\eta^{\ell-1}(\mathcal B_1(\pi))$ contains an element of $\mathscr E_m(\pi)$. The assumption that $\mathscr E_m(\pi)$ is quarantined implies that each of the sets $\mathcal M_j(\pi)$ with $1\leq j\leq \ell$ contains at least one element of $\bigcup_{i=1}^m\mathscr B_i(\pi)$. It follows that $\bigcup_{j=1}^\ell\mathcal M_j(\pi)$ contains $\mathscr E_m(\pi)$ and at least $\ell$ elements of $\bigcup_{i=1}^m\mathscr B_i(\pi)$. Thus, there are at most $n-|\mathscr E_m(\pi)|-\ell$ elements of $\bigcup_{j=\ell+1}^{\sd'(\pi)}\mathcal M_j(\pi)$. Each of the sets $\mathcal M_j(\pi)$ with $\ell+1\leq j\leq \sd'(\pi)$ is nonempty, so \[n-|\mathscr E_m(\pi)|-\ell\geq\sum_{j=\ell+1}^{\sd'(\pi)}|\mathcal M_j(\pi)|\geq\sum_{j=\ell+1}^{\sd'(\pi)}1=\sd'(\pi)-\ell.\] This completes the proof since $i_m=n-|\mathscr E_m(\pi)|$.
\end{proof}

\begin{lemma}\label{Lem6}
Let $0=i_0<i_1<\cdots<i_r=n$ be integers. Choose a permutation $\pi=\pi_1\cdots\pi_n\in S_n$ uniformly at random among all permutations in $S_n$ whose right-to-left maxima are in positions $i_1,\ldots,i_r$. Form the ordered set partition $\mathcal B_1(\pi)=(\mathscr B_1(\pi),\ldots,\mathscr B_r(\pi))$. For $1\leq m\leq r-1$, let $\mathscr E_m(\pi)=\bigcup_{i=m+1}^r\mathscr B_i(\pi)$. The probability that $\mathscr E_m(\pi)$ is quarantined in $\mathcal B_1(\pi)$ is at least \[1-\left(\frac{n-i_m}{i_m-i_{m-1}}\right)^2.\]
\end{lemma}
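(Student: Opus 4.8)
The plan is to condition on the set $S$ of entries of $\pi$ that lie in positions $i_{m-1}+1,\dots,n$. Then $\mathscr B_m(\pi)\subseteq S$, $\mathscr E_m(\pi)=S\setminus\mathscr B_m(\pi)$, and $\max S=\pi_{i_m}=\max\mathscr B_m(\pi)$ is forced. Writing $c_m=i_m-i_{m-1}=|\mathscr B_m(\pi)|$ and $k=n-i_m=|\mathscr E_m(\pi)|$, I will show that for every value of $S$ that can occur, the conditional probability that $\mathscr E_m(\pi)$ is \emph{quarantined in} $\mathcal B_1(\pi)$ equals $\frac{(c_m+k)(c_m+1-k)}{c_m(c_m+1)}$ when $k\le c_m$ and equals $0$ when $k>c_m$ (the case in which the bound $1-(k/c_m)^2$ is vacuous), and that in either case this quantity is at least $1-(k/c_m)^2$. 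Averaging over $S$ then finishes the proof.

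The first and most delicate step is to identify the conditional law of $\mathscr B_m(\pi)$ given $S$. A permutation in $S_n$ with right-to-left maxima in positions $i_1,\dots,i_r$ and with the prescribed $S$ decomposes as a prefix (an ordering of $[n]\setminus S$ into blocks of sizes $i_1-i_0,\dots,i_{m-1}-i_{m-2}$ obeying the corresponding right-to-left-maxima constraints, together with $\max\mathscr B_{m-1}>\max S$ — a vacuous requirement when $m=1$) concatenated with a suffix (an ordering of $S$ into blocks of sizes $c_m,\dots,i_r-i_{r-1}$ obeying the remaining constraints). I would check the small but genuinely necessary point that these two parts decouple: for a position $p\le i_{m-1}$, being a right-to-left maximum of the whole permutation depends only on the standalone prefix and on the single inequality $\max\mathscr B_{m-1}>\max S$, which in turn depends only on $S$; and for $p>i_{m-1}$ it depends only on the standalone suffix. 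Within the suffix, the block $\mathscr B_m(\pi)$ may be any $c_m$-subset $A$ of $S$ with $\max S\in A$, the number of legal orderings of such an $A$ into block $m$ is $(c_m-1)!$ regardless of $A$, and the number of legal orderings of $S\setminus A$ into the remaining blocks depends only on $k$ and the block sizes, not on $A$ (immediate by relabelling, or via Foata's transition lemma). It follows that, conditioned on $S$, the set $\mathscr B_m(\pi)$ is uniformly distributed among all $c_m$-subsets of $S$ containing $\max S$.

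Next I would recast the quarantine condition as a lattice-path condition. List the elements of $S$ in decreasing order and record for each whether it lies in $\mathscr B_m(\pi)$ or in $\mathscr E_m(\pi)$; this produces a word in two symbols, necessarily beginning with a $\mathscr B_m$-symbol since $\max S\in\mathscr B_m(\pi)$, which conditioned on $S$ is uniform among all words with that first symbol having $c_m$ symbols of the first kind and $k$ of the second. Interpreting the first kind as an up-step and the second as a down-step, the condition that the $j$th-largest element of $\mathscr B_m(\pi)$ exceed the $j$th-largest element of $\mathscr E_m(\pi)$ for all $j$ is exactly the condition that the $j$th up-step precede the $j$th down-step for all $j$, i.e.\ that every prefix of the path have at least as many up-steps as down-steps; this in particular forces $c_m\ge k$. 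A standard reflection/ballot count then gives the number of such words as $\binom{c_m+k}{k}-\binom{c_m+k}{k-1}$ out of the $\binom{c_m+k-1}{k}$ words with the prescribed first symbol, which simplifies to the claimed probability $\frac{(c_m+k)(c_m+1-k)}{c_m(c_m+1)}$. Comparing with $1-(k/c_m)^2=\frac{(c_m-k)(c_m+k)}{c_m^2}$, the difference factors as $\frac{(c_m+k)k}{c_m^2(c_m+1)}\ge0$, which completes the argument.

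I expect the decoupling in the second paragraph to be the main obstacle: because being a right-to-left maximum is a global property of a permutation, it is not a priori clear that the part of $\pi$ before position $i_{m-1}$ and the part after can be chosen independently once $S$ is fixed, and one must check carefully that the only coupling between them is through the value $\max S$. Once this is in place, the reformulation as a path staying weakly above the axis and the ballot count are routine.
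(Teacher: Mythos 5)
Your proposal is correct and follows essentially the same route as the paper: encode the quarantine condition as a lattice path above/below a diagonal, argue that the path is uniformly distributed, and finish with a ballot-problem count (you compute it directly; the paper cites Krattenthaler). The main difference is that the paper merely asserts ``every such path is equally likely to arise,'' whereas you supply the prefix/suffix decoupling argument that justifies it — a worthwhile addition, but not a different method.
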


\begin{proof}
Let $\mathscr U(\pi)=\mathscr B_m(\pi)\cup\mathscr E_m(\pi)$. We can write $\mathscr U(\pi)=\{u_1>\cdots>u_{n-i_{m-1}}\}$. We can use these sets to define a lattice path $\mathscr L(\pi)$ in $\mathbb Z^2$ that starts at $(0,0)$ and ends at $(i_m-i_{m-1},n-i_m)$ as follows. If $u_j\in\mathscr B_m(\pi)$, let the $j^\text{th}$ step of $\mathscr L(\pi)$ be an east step (i.e., a $(1,0)$ step). Otherwise, we have $u_j\in\mathscr E_m(\pi)$; in this case, let the $j^\text{th}$ step of $\mathscr L(\pi)$ be a north step (i.e., a $(0,1)$ step). Notice that $\pi_{i_m}=u_1$ because $\pi_{i_m}$ is a right-to-left maximum of $\pi$. This means that the first step of $\mathscr L(\pi)$ is an east step. If we remove this initial east step, we obtain a lattice path $\mathscr L'(\pi)$ starting at $(0,1)$ and ending at $(i_m-i_{m-1},n-i_m)$ that uses only east steps and north steps. Every such path is equally likely to arise as $\mathscr L'(\pi)$ when we choose $\pi$ at random. The event that $\mathscr E_m(\pi)$ is quarantined in $\mathcal B_1(\pi)$ is equivalent to the event that $\mathscr L'(\pi)$ stays weakly below the line $y=x$. According to \cite[Theorem 10.3.1]{Krattenthaler}, the probability that $\mathscr L'(\pi)$ stays weakly below the line $y=x$ is \[\frac{{n-i_{m-1}-1\choose i_m-i_{m-1}-1}-{n-i_{m-1}-1\choose i_m-i_{m-1}+1}}{{n-i_{m-1}-1\choose i_m-i_{m-1}-1}}=1-\frac{(n-i_m)(n-i_m-1)}{(i_m-i_{m-1}+1)(i_m-i_{m-1})}\geq 1-\left(\frac{n-i_m}{i_m-i_{m-1}}\right)^2. \qedhere\]
\end{proof}

\begin{proof}[Proof of the Upper Bound in Theorem \ref{Thm1}]
For $x\in(0,1)$, let \[F_0(x)=\frac{1}{1-x}\int_{\frac{x+1}{2}}^1\left(\left(1-\left(\frac{1-y}{y-x}\right)^2\right)y+\left(\frac{1-y}{y-x}\right)^2\right)\,dy.\] One can check that \[F_0(x)=a_0x+b_0, \quad\text{where}\quad a_0=3\log 2-2\quad\text{and}\quad b_0=\frac{5}{2}-3\log 2.\]

Let us choose a random permutation $\pi\in S_n$, where $n$ is very large. Recall that $\dfrac{\mathcal D_n'}{n}$ is the expected value of $\dfrac{\sd'(\pi)}{n}$. Let $i_1<\cdots<i_r$ be the positions of the right-to-left maxima of $\pi$. Consider the ordered set partition $\mathcal B_1(\pi)=(\mathscr B_1(\pi),\ldots,\mathscr B_r(\pi))$. For $1\leq m\leq r-1$, let $\mathscr E_m(\pi)=\bigcup_{i=m+1}^r\mathscr B_i(\pi)$. The position $i_1$ of the maximum entry $n$ is uniformly distributed among $\{1,\ldots,n\}$. Let us first suppose $i_1\geq n/2$. Once $i_1$ is chosen, we can use Lemma \ref{Lem6} (with $m=1$) to see that the probability that $\mathscr E_1(\pi)$ is quarantined in $\mathcal B_1(\pi)$ is at least $1-\left(\dfrac{n-i_1}{i_1}\right)^2$. If $\mathscr E_1(\pi)$ is quarantined in $\mathcal B_1(\pi)$, then it follows from Lemma~\ref{Lem5} that $\dfrac{\sd'(\pi)}{n}\leq \dfrac{i_1}{n}$. If $\mathscr E_1(\pi)$ is not quarantined, then (trivially) $\dfrac{\sd'(\pi)}{n}\leq 1$. If $i_1<n/2$, then again $\dfrac{\sd'(\pi)}{n}\leq 1$. Thus, the expected value of $\dfrac{\sd'(\pi)}{n}$ is at most \[\frac{1}{n}\left[\sum_{i_1\geq n/2}\left(\left(1-\left(\frac{n-i_1}{i_1}\right)^2\right)\frac{i_1}{n}+\left(\dfrac{n-i_1}{i_1}\right)^2\cdot 1\right)+\sum_{i_1<n/2}1\right].\] As $n\to\infty$, this last expression tends to \[\int_{1/2}^{1}\left(\left(1-\left(\frac{1-x_1}{x_1}\right)^2\right)x_1+\left(\frac{1-x_1}{x_1}\right)^2\right)\,dx_1+\frac{1}{2}=F_0(0)+\frac{1}{2}.\] 

This proves that $\displaystyle\limsup_{n\to\infty}\frac{\mathcal D_n'}{n}\leq F_0(0)+\dfrac{1}{2}\approx 0.92056$, but we can improve upon the $\dfrac{1}{2}$ term. If $i_1<n/2$, then we can proceed to consider $i_2$, which is uniformly distributed among $\{i_1+1,\ldots,n\}$. Let us first suppose $i_2\geq (i_1+n)/2$. Once $i_2$ is chosen, we can use Lemma \ref{Lem6} (with $m=2$) to see that the probability that $\mathscr E_2(\pi)$ is quarantined in $\mathcal B_1(\pi)$ is at least $1-\left(\dfrac{n-i_2}{i_2-i_1}\right)^2$. If $\mathscr E_1(\pi)$ is quarantined in $\mathcal B_1(\pi)$, then it follows from Lemma~\ref{Lem5} that $\dfrac{\sd'(\pi)}{n}\leq \dfrac{i_2}{n}$. If $\mathscr E_1(\pi)$ is not quarantined, then $\dfrac{\sd'(\pi)}{n}\leq 1$. If $i_2<(i_1+n)/2$, then again $\dfrac{\sd'(\pi)}{n}\leq 1$. Thus, the expected value of $\dfrac{\sd'(\pi)}{n}$ is at most \[F_0(0)+\frac{1}{n}\sum_{i_1<n/2}\frac{1}{n-i_1}\left[\sum_{(i_1+n)/2\leq i_2\leq n}\left(\left(1-\left(\frac{n-i_2}{i_2-i_1}\right)^2\right)\frac{i_2}{n}+\left(\dfrac{n-i_2}{i_2-i_1}\right)^2\cdot 1\right)+\sum_{i_1<i_2<(i_1+n)/2}1\right].\] As $n\to\infty$, this last expression tends to \[F_0(0)+\int_0^{1/2}\frac{1}{1-x_1}\int_{\frac{x_1+1}{2}}^1\left(\left(1-\left(\frac{1-x_2}{x_2-x_1}\right)^2\right)x_2+\left(\frac{1-x_2}{x_2-x_1}\right)^2\right)\,dx_2\,dx_1+\frac{1}{4}\] \[=F_0(0)+\int_0^{1/2}F_0(x_1)\,dx_1+\frac{1}{4}.\]

We can continue to repeat this process. In the $(m+1)^\text{th}$ step, we find that in the limit $n\to\infty$, the expected value of $\dfrac{\sd'(\pi)}{n}$ is at most \[F_0(0)+\int_0^{1/2}F_0(x_1)\,dx_1+\int_0^{1/2}\frac{1}{1-x_1}\int_{x_1}^{\frac{x_1+1}{2}}F_0(x_2)\,dx_2\,dx_1+\cdots\] \[+\int_0^{1/2}\frac{1}{1-x_1}\int_{x_1}^{\frac{x_1+1}{2}}\frac{1}{1-x_2}\cdots\int_{x_{m-1}}^{\frac{x_{m-1}+1}{2}}F_0(x_m)\,dx_m\cdots\,dx_2\,dx_1+\frac{1}{2^{m+1}}.\] If we recursively define $\displaystyle F_\ell(x)=\frac{1}{1-x}\int_x^{\frac{x+1}{2}}F_{\ell-1}(y)\,dy$ for all $\ell\geq 0$, then this last expression takes a much simpler form, and we obtain the inequality \[\limsup_{n\to\infty}\frac{\mathcal D_n'}{n}\leq\sum_{\ell=0}^mF_\ell(0)+\frac{1}{2^{m+1}}.\] But now it is straightforward to prove by induction on $\ell$ (recalling that $F_0(x)=a_0x+b_0$) that $F_\ell(x)=a_\ell x+b_\ell$ for some constants $a_\ell$ and $b_\ell$. Furthermore, these constants satisfy the recurrence relations \[a_\ell=\frac{3}{8}a_{\ell-1}\quad\text{and}\quad b_\ell=\frac{1}{8}a_{\ell-1}+\frac{1}{2}b_{\ell-1}.\] A simple inductive argument yields \[a_\ell=\left(\frac{3}{8}\right)^\ell a_0\quad\text{and}\quad b_\ell=\frac{1}{2^\ell}\left(\left(1-\left(\frac{3}{4}\right)^\ell\right)a_0+b_0\right).\] Putting this all together, we obtain \[\limsup_{n\to\infty}\frac{\mathcal D_n'}{n}\leq\sum_{\ell=0}^\infty F_\ell(0)=\sum_{\ell=0}^\infty b_\ell=\sum_{\ell=0}^\infty \frac{1}{2^\ell}\left(\left(1-\left(\frac{3}{4}\right)^\ell\right)a_0+b_0\right)=\frac{2}{5}a_0+2b_0\] \[=\frac{2}{5}(3\log 2-2)+2\left(\frac{5}{2}-3\log 2\right)=\frac{3}{5}(7-8\log 2).\] The desired upper bound for $\limsup\limits_{n\to\infty}\dfrac{\mathcal D_n}{n}$ now follows from Proposition \ref{Prop1}. 
\end{proof}

\section{Fertility Monotonicity}

We now shift our focus to Theorem \ref{Thm2}. In this section, it will be helpful to make use of the \emph{plot} of a permutation $\pi=\pi_1\cdots\pi_n$, which is the diagram showing the points $(i,\pi_i)\in\mathbb R^2$ for all $1\leq i\leq n$. A \emph{hook} of $\pi$ is a rotated L shape connecting two points $(i,\pi_i)$ and $(j,\pi_j)$ with $i<j$ and $\pi_i<\pi_j$, as in Figure~\ref{Fig4}. The point $(i,\pi_i)$ is the \emph{southwest endpoint} of the hook, and $(j,\pi_j)$ is the \emph{northeast endpoint} of the hook. Let $\SW_i(\pi)$ be the set of hooks of $\pi$ with southwest endpoint $(i,\pi_i)$. For example, Figure~\ref{Fig4} shows the plot of the permutation $\pi=426315789$. The hook shown in this figure is in $\SW_3(\pi)$ because its southwest endpoint is $(3,6)$. It's northeast endpoint is $(8,8)$. 

\begin{figure}[h]
  \begin{center}{\includegraphics[width=0.22\textwidth]{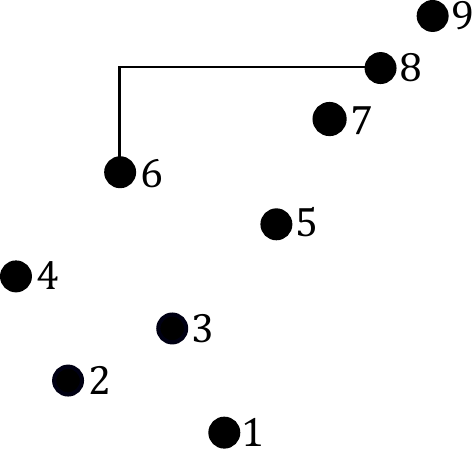}}
  \end{center}
  \caption{The plot of $426315789$ along with a single hook.}\label{Fig4}
\end{figure}

A \emph{descent} of $\pi$ is an index $i\in[n-1]$ such that $\pi_i>\pi_{i+1}$. If $\pi\in S_n$, then the \emph{tail length} of $\pi$ is the largest integer $\ell\in\{0,\ldots,n\}$ such that $\pi_i=i$ for all $i\in\{n-\ell+1,\ldots,n\}$. The \emph{tail} of $\pi$ is then defined to be the sequence of points $(n-\ell+1,n-\ell+1),\ldots,(n,n)$. For example, the tail of the permutation $426315789$ in Figure~\ref{Fig4} is the sequence $(7,7), (8,8), (9,9)$. 
We say a descent $d$ of $\pi$ is \emph{tail-bound} if every hook in $\SW_d(\pi)$ has its northeast endpoint in the tail of $\pi$. The descents of $426315789$ are $1$, $3$, and $4$, but the only tail-bound descent is $3$. In general, if $\pi\in S_n\setminus\{123\cdots n\}$ has tail length $\ell$, then the index $i$ such that $\pi_i=n-\ell$ is a tail-bound descent of $\pi$.  

Let $H$ be a hook of $\pi$ with southwest endpoint $(i,\pi_i)$ and northeast endpoint $(j,\pi_j)$. Define the \emph{$H$-unsheltered subpermutation of $\pi$} by $\pi_U^H=\pi_1\cdots\pi_i\pi_{j+1}\cdots\pi_n$. Similarly, define the \emph{$H$-sheltered subpermutation of $\pi$} by $\pi_S^H=\pi_{i+1}\cdots\pi_{j-1}$. For instance, if $\pi=426315789$ and $H$ is the hook shown in Figure~\ref{Fig4}, then $\pi_U^H=4269$ and $\pi_S^H=3157$. In applications, the plot of $\pi_S^H$ will lie entirely below the hook $H$ (it is ``sheltered" by $H$). In particular, this will be the case if $i$ is a tail-bound descent of $\pi$.  

The following Decomposition Lemma, originally proven in \cite{DefantCounting}, will be one of our main tools for analyzing fertilities of permutations.  

\begin{theorem}[Decomposition Lemma \cite{DefantCounting}]\label{Thm3}
If $d$ is a tail-bound descent of a nonempty permutation $\pi$, then \[|s^{-1}(\pi)|=\sum_{H\in\SW_d(\pi)}|s^{-1}(\pi_U^H)|\cdot|s^{-1}(\pi_S^H)|.\] 
\end{theorem}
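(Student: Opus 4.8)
\textbf{Proof proposal for the Decomposition Lemma (Theorem~\ref{Thm3}).}

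The plan is to give a direct bijective proof at the level of preimages. Fix a tail-bound descent $d$ of $\pi$ and consider any $\tau\in s^{-1}(\pi)$. Writing $\pi$ with largest entry $n$ in position $p$ as $\pi = AnB$, the recursive formula $s(\tau)=s(L)s(R)\,m$ tells us that if $\tau = L\,n\,R$ (with $n$ the max of $\tau$), then $s(L)=A$ and $s(R)=B$. So preimages of $\pi$ are built from preimages of its pieces in the obvious recursive way. I will use this together with Lemma~\ref{Lem3} (the $231$/$21$ correspondence) to pin down, for each $\tau\in s^{-1}(\pi)$, a canonically associated hook $H\in\SW_d(\pi)$, and then show that the fiber of $\tau$'s over a fixed $H$ is exactly $s^{-1}(\pi_U^H)\times s^{-1}(\pi_S^H)$.

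Here is the concrete mechanism. Let $(d,\pi_d)$ be the southwest endpoint and let the entries of $\pi$ strictly to the right of position $d$ that participate with $\pi_d$ in some $231$ pattern of $\pi$ be relevant; because $d$ is a tail-bound descent, every entry of $\pi$ lying above the hook from $(d,\pi_d)$ is in the tail, which forces a clean ``interval'' structure: for a hook $H\in\SW_d(\pi)$ with northeast endpoint $(j,\pi_j)$ in the tail, the sheltered block $\pi_S^H=\pi_{d+1}\cdots\pi_{j-1}$ has its plot entirely below $H$, and $\pi_U^H=\pi_1\cdots\pi_d\pi_{j+1}\cdots\pi_n$ is a genuine subpermutation. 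Given $\tau\in s^{-1}(\pi)$, I claim there is a unique $j$ such that, at the moment the entry $\pi_d$ is pushed onto the stack during the stack-sorting of $\tau$, the entries that get trapped below $\pi_d$ (popped after $\pi_d$ but before the next entry exceeding $\pi_d$) are exactly the entries of $\pi_S^H$ for the hook $H$ with northeast endpoint $j$. The forward map sends $\tau$ to this $H$ together with the induced preimages: restricting the stack-sorting process to the sheltered entries yields an element $\tau_S\in s^{-1}(\pi_S^H)$, and the complementary entries (everything else, in their relative order in $\tau$) yield $\tau_U\in s^{-1}(\pi_U^H)$. The inverse map interleaves a given pair $(\tau_U,\tau_S)$: insert the word $\tau_S$ into $\tau_U$ immediately after the entry $\pi_d$ (with values shifted back to their original labels), and one checks via the recursive definition of $s$ that this produces a permutation mapping to $\pi$ under $s$ and that the round trip is the identity. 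Summing $|s^{-1}(\pi_U^H)|\cdot|s^{-1}(\pi_S^H)|$ over $H\in\SW_d(\pi)$ then counts $|s^{-1}(\pi)|$ exactly once each, giving the formula.

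The main obstacle, and the step requiring real care, is proving that the ``sheltered set'' of a preimage $\tau$ is always exactly the entry set of $\pi_S^H$ for some genuine hook $H\in\SW_d(\pi)$ — i.e.\ that the decomposition is \emph{well defined} and that nothing leaks across the hook in either direction. This is where the hypothesis that $d$ is \emph{tail-bound} is essential: it guarantees that every entry which could sit above the southwest endpoint $(d,\pi_d)$ in a hook ends up in the tail of $\pi$, so that after stack-sorting these entries appear in their sorted positions and cannot be interleaved with sheltered entries in $\pi$. Concretely, I would argue that if an entry $x<\pi_d$ lies to the right of position $d$ in $\tau$ and is popped before some entry $y>\pi_d$ that is also to the right of $\pi_d$'s pushing time, then in $\pi = s(\tau)$ the entry $x$ lands to the left of $\pi_d$ — contradicting $x$ being to the right of $\pi_d$ in $\pi$ unless the ``break'' entry $y$ is itself in the tail; a short case analysis using Lemma~\ref{Lem3} and the definition of tail-bound closes this. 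Once well-definedness is established, checking bijectivity is the routine recursive bookkeeping sketched above, and the count follows immediately.
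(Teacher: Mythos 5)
First, be aware that the paper does \emph{not} prove the Decomposition Lemma; it states it as imported from \cite{DefantCounting}. So there is no in-paper proof to compare against, and I will judge your argument on its own terms.

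Your high-level strategy is the right one: partition $s^{-1}(\pi)$ according to a hook in $\SW_d(\pi)$ determined by the preimage $\tau$, and match each class with $s^{-1}(\pi_U^H)\times s^{-1}(\pi_S^H)$. The correct invariant is the following: $\pi_j$ should be the first entry greater than $\pi_d$ appearing to the right of $\pi_d$ in $\tau$ (equivalently, the parent of $\pi_d$ in the decreasing binary plane tree of $\tau$; the tail-bound hypothesis guarantees $(j,\pi_j)$ lies in the tail). However, the mechanism you actually describe does not compute this invariant, and the two errors are fatal to the bijection.

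\textbf{The forward map is misidentified.} You say the sheltered block consists of "entries trapped below $\pi_d$" between the time $\pi_d$ is pushed and the next entry exceeding $\pi_d$. Under the natural reading, those entries are the entries of $\tau$ strictly between $\pi_d$ and $\pi_j$, which form the \emph{right subtree of $\pi_d$}, all of which are smaller than $\pi_d$. But $\pi_S^H=\pi_{d+1}\cdots\pi_{j-1}$ is the \emph{right subtree of $\pi_j$} (its postorder occupies positions $d+1,\ldots,j-1$ of $\pi$), and in $\tau$ those entries sit to the \emph{right} of $\pi_j$, not between $\pi_d$ and $\pi_j$; moreover $\pi_S^H$ can contain tail entries larger than $\pi_d$. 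Concretely, take $\pi=2134$ and $d=1$. For $\tau=2413\in s^{-1}(\pi)$, the first entry of $\tau$ larger than $\pi_d=2$ is $4=\pi_4$, so the hook has northeast endpoint $(4,4)$ and $\pi_S^H=13$. Yet the entries of $\tau$ lying between $2$ and $4$ form the empty set; the sheltered entries $1,3$ lie after $4$ in $\tau$.

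\textbf{The inverse map cannot produce a permutation of the correct length.} The entry $\pi_j$ is in neither $\pi_U^H$ nor $\pi_S^H$, so "insert the word $\tau_S$ into $\tau_U$" leaves $\pi_j$ unaccounted for. Even if you also insert $\pi_j$, the insertion point "immediately after the entry $\pi_d$" is wrong: you must insert $\pi_j$ together with $\tau_S$ immediately after the entire \emph{subtree} of $\pi_d$ as it sits inside $\tau_U$. For instance, with $\pi=13245$, $d=2$, the hook with northeast endpoint $(4,4)$, $\tau_U=531\in s^{-1}(135)$, and $\tau_S=2$: inserting $4\,\tau_S=42$ right after $\pi_d=3$ gives $53421$, which satisfies $s(53421)=31245\neq\pi$; the actual preimage is $53142$, obtained by inserting $42$ after the block $31$, the full subtree of $3$ in $531$.

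So while the plan is the right shape, the bijection as described does not exist. To repair it you need to (i) define $j$ via the first larger entry of $\tau$ to the right of $\pi_d$ (the parent in the tree), (ii) extract $\tau_S$ as the contiguous block of $\tau$ lying between $\pi_j$ and the first subsequent entry larger than $\pi_j$ (the right subtree of $\pi_j$), and (iii) show that deleting that block and then contracting $\pi_j$ onto $\pi_d$ yields a bijection onto $s^{-1}(\pi_U^H)$; the last step needs a separate argument since $\pi_U^H$ omits $\pi_j$.
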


Our second main tool will be the following results relating the stack-sorting map to the left weak order on $S_n$. Recall the relevant definitions from Remark \ref{Rem1}.

\begin{lemma}\label{Lem9}
Let $\pi\in S_n$ and $i\in[n-1]$. Suppose $i+1$ appears to the left of $i$ in $\pi$. The map $\widetilde t_i$ is an injection from $s^{-1}(\pi)$ to $s^{-1}(\widetilde t_i(\pi))$. If there exists an entry $a$ such that $i+1,a,i$ form a $231$ pattern in $\pi$, then $\widetilde t_i:s^{-1}(\pi)\to s^{-1}(\widetilde t_i(\pi))$ is bijective. 
\end{lemma}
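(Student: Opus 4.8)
The plan is to work directly with the recursive structure of the stack-sorting map, tracking where the entries $i$ and $i+1$ go under $s$. Since $i$ and $i+1$ are consecutive integers, in any permutation $\tau$ with $s(\tau)=\pi$ the entries $i$ and $i+1$ play nearly interchangeable roles: the only entry that can ``fall between'' them in a $21$ or $231$ pattern is… nothing, because no integer lies strictly between $i$ and $i+1$. This is the structural fact that makes the lemma work. Concretely, I would first argue that for $\tau\in s^{-1}(\pi)$, the permutation $\widetilde t_i(\tau)$ (swap the positions of $i$ and $i+1$ in $\tau$) satisfies $s(\widetilde t_i(\tau))=\widetilde t_i(\pi)$. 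To see this, note that applying $s$ is insensitive to swapping two consecutive-valued entries \emph{unless} doing so changes which $231$ patterns are present; but a $231$ pattern $b,c,a$ involving exactly one of $\{i,i+1\}$ as the ``$1$'' or the ``$3$'' is unaffected by the swap (replacing $i$ by $i+1$ as the small entry $a$, or as the large entry $b$ or $c$, preserves all the relevant inequalities with every other entry, since no entry equals $i+\tfrac12$), and a $231$ pattern using \emph{both} $i$ and $i+1$ must have $i+1$ in the ``$3$'' role and $i$ in the ``$1$'' role, with some $c$ strictly between them in value — impossible. Hence the multiset of $21$ patterns of $s(\tau)$, read off via Lemma \ref{Lem3}, is exactly transformed by the transposition of $i$ and $i+1$, which forces $s(\widetilde t_i(\tau))=\widetilde t_i(s(\tau))=\widetilde t_i(\pi)$. (One should double-check the edge case where $i$ and $i+1$ are adjacent in $\tau$; the same pattern analysis applies.) Since $\widetilde t_i$ is an involution on $S_n$, it is injective, giving the first claim.

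For the second claim — bijectivity when some $a$ with $i+1,a,i$ a $231$ pattern in $\pi$ exists — I would produce the inverse map. The natural candidate is $\widetilde t_i$ itself (restricted to $s^{-1}(\widetilde t_i(\pi))$), so it suffices to show that for every $\rho\in s^{-1}(\widetilde t_i(\pi))$, the entry $i+1$ appears to the left of $i$ in $\rho$; then $\widetilde t_i(\rho)\in s^{-1}(\pi)$ by the first part (applied with the roles of $\pi$ and $\widetilde t_i(\pi)$ reversed, which is legitimate once we know $i+1$ precedes $i$ in $\rho$), and $\widetilde t_i\circ\widetilde t_i=\mathrm{id}$ finishes it. So the crux is: \emph{the hypothesis on $\pi$ forces $i+1$ to precede $i$ in every preimage $\rho$ of $\widetilde t_i(\pi)$.} Observe that in $\widetilde t_i(\pi)$ the entry $i$ is to the left of $i+1$, and since $i+1,a,i$ was a $231$ pattern in $\pi$, after swapping we get that $i,a,i+1$ — wait, more carefully: $a<i<i+1$ and $a$ lies between $i+1$ and $i$ in $\pi$, so in $\widetilde t_i(\pi)$ the entry $a$ lies between $i$ and $i+1$, i.e. $i,a,i+1$ appear in that left-to-right order with $a<i<i+1$. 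Thus $i, i+1$ form the first and second entries of a $231$-type configuration only if something smaller follows, but what I actually want is: the pair of positions of $i$ and $i+1$ in $\widetilde t_i(\pi)$, together with $a$ in between, shows that $i$ and $i+1$ do \emph{not} form a $21$ pattern in $\widetilde t_i(\pi)$ (they're in increasing order), and moreover by Lemma \ref{Lem3} applied in reverse, for any $\rho$ with $s(\rho)=\widetilde t_i(\pi)$: I should look at whether $i+1, i$ can form a $21$ pattern in $\rho$, equivalently whether some $c$ makes $i+1,c,i$ a $231$ in $\rho$ — but I want the opposite conclusion, that $i+1$ \emph{does} precede $i$ in $\rho$.

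The cleaner route to the second claim: since $a$ lies strictly between $i$ and $i+1$ in $\widetilde t_i(\pi)$ and $a<i$, the entries $i+1$ and $i$ together with $a$ (in the order $i, a, i+1$? no) — let me instead use Lemma \ref{Lem4}-style reasoning on $\rho$. Write $\rho = L\,b\,R$ where $b$ is the largest entry; by the recursion $s(\rho)=s(L)s(R)b$, and one tracks $i,i+1$ down into $L$ or $R$ and induces. The induction hypothesis is precisely the lemma for smaller permutations, with the key point that the $231$-witness $a$ in $\widetilde t_i(\pi)=s(L)s(R)b$ must sit within a single block $s(L)$ or $s(R)$ together with $i$ and $i+1$ (since $i,i+1<b$ they're both in $LR$; if they landed in different blocks, $i+1\in s(L)$ forces $i+1$ left of $i$ automatically and we're done, so assume same block), and then by Lemma \ref{Lem3} the $231$ pattern $i,a,i+1$— hmm, I have them in increasing order. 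Let me reconsider: actually the witness I need in $\rho$ is that $i+1,c,i$ is a $231$ pattern in $\rho$ for some $c$; this is equivalent by Lemma \ref{Lem3} to $i+1,i$ being a $21$ pattern in $s(\rho)=\widetilde t_i(\pi)$ — but in $\widetilde t_i(\pi)$ the entry $i$ precedes $i+1$! So that's false, meaning $i+1,i$ do \emph{not} form a $21$ pattern in $\widetilde t_i(\pi)$, which tells us nothing directly. I think the correct statement to prove by induction is: \emph{if $i+1,a,i$ is a $231$ pattern in $s(\rho)$ then $i+1$ precedes $i$ in $\rho$} — no wait, in $\widetilde t_i(\pi)$ we have $i$ before $i+1$, with $a$ between and $a<i$, so $i+1$ is a right-to-left... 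The configuration $i,a,i+1$ with $a<i<i+1$ means $i+1$ has no $231$ pattern as its ``$1$'', but $a, i+1$ with $i$ before — ugh.

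I expect \textbf{this second part to be the main obstacle}: identifying the exact combinatorial invariant forced on preimages by the existence of the $231$-witness $a$, and pushing it through the $L\,b\,R$ recursion. My best guess for the right inductive claim is: for any permutation $\mu$ in which $i+1$ appears to the left of $i$ with some $a$ making $i+1,a,i$ a $231$ pattern, every $\rho\in s^{-1}(\mu)$ also has $i+1$ to the left of $i$ — and then applying this to $\mu = \pi$ and separately running the symmetric argument after the swap. The adjacency and block-splitting edge cases in the recursion will need care, but the pattern-preservation engine (Lemma \ref{Lem3}) plus the ``nothing lies between $i$ and $i+1$'' observation should carry it through.
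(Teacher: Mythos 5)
Your proposal has genuine gaps in both halves, and the first is not a technicality but a wrong claim about $231$ patterns.

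For the injectivity step, you argue that no $231$ pattern can use both $i$ and $i+1$ because ``$i+1$ must play the `$3$' role and $i$ the `$1$' role with something strictly between.'' That misses two cases: $i$ and $i+1$ can sit in the `$1$' and `$2$' roles, or in the `$2$' and `$3$' roles, neither of which requires a value between them. In fact the first of these, the pattern $i+1,c,i$ with $c>i+1$, is not merely possible but is \emph{forced} to exist in every $\sigma\in s^{-1}(\pi)$: since $i+1,i$ form a $21$ pattern in $\pi$, Lemma~\ref{Lem3} produces such a $c$ in $\sigma$. This witness $c$ is the entire engine of the argument, because $c>i+1$ sitting between $i+1$ and $i$ in $\sigma$ guarantees $i$ and $i+1$ never interact in the stack, and that is why $s(\widetilde t_i(\sigma))=\widetilde t_i(\pi)$. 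Your general commutation claim $s(\widetilde t_i(\tau))=\widetilde t_i(s(\tau))$ is false without such a separator; for instance with $\tau=12$ and $i=1$ an unconditional swap gives $s(21)=12\neq 21$. Relatedly, the paper's $\widetilde t_i$ is idempotent, not an involution; the reason $\widetilde t_i$ restricted to $s^{-1}(\pi)$ is injective is precisely that the Lemma~\ref{Lem3} witness forces $i+1$ to lie left of $i$ in every $\sigma\in s^{-1}(\pi)$, so the restriction is a genuine swap.

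For the surjectivity step, the invariant you propose, namely that every $\rho\in s^{-1}(\widetilde t_i(\pi))$ has $i+1$ to the left of $i$, is false. Take $\pi=24135$ and $i=1$, so the witness $a=4$ exists and $\widetilde t_1(\pi)=14235$. Then $\rho=14523$ satisfies $s(\rho)=s(14)\,s(23)\,5=14235$, yet $1$ precedes $2$ in $\rho$. The paper's route is the reverse of what you guessed: given $\sigma'\in s^{-1}(\widetilde t_i(\pi))$, apply Lemma~\ref{Lem3} to the $21$ pattern $a,i+1$ in $\widetilde t_i(\pi)$ to obtain $b$ with $a,b,i+1$ a $231$ pattern in $\sigma'$, then use the \emph{absence} of a $21$ pattern $a,i$ in $\widetilde t_i(\pi)$ (again via Lemma~\ref{Lem3}) to locate $i$ to the \emph{left} of $b$, so that $i,b,i+1$ appear in that order in $\sigma'$ with $b>i+1$. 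Swapping $i$ and $i+1$ in $\sigma'$ then produces $\sigma''$ with $\widetilde t_i(\sigma'')=\sigma'$ and $s(\sigma'')=\pi$, giving surjectivity without any recursion on $LmR$ blocks.
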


\begin{proof}
Choose $\sigma\in s^{-1}(\pi)$. Since $i+1, i$ form a $21$ pattern in $\pi$, it follows from Lemma~\ref{Lem3} that there is some entry $c$ such that $i+1, c, i$ form a $231$ pattern in $\sigma$. It is now immediate from the definition of $s$ that $s(\widetilde t_i(\sigma))=\widetilde t_i(\pi)$. The map $\widetilde t_i$ is clearly injective, so the proof of the first statement is complete. 

Now suppose $i+1,a,i$ form a $231$ pattern in $\pi$. These three entries appear in $\widetilde t_i(\pi)$ in the order $i,a,i+1$. Choose $\sigma'\in s^{-1}(\widetilde t_i(\pi))$. Because $a,i+1$ form a $21$ pattern in $\widetilde t_i(\pi)$, we can invoke Lemma~\ref{Lem3} to see that there exists an entry $b$ such that $a,b,i+1$ form a $231$ pattern in $\sigma'$. The entries $a,i$ do not form a $21$ pattern in $\widetilde t_i(\pi)$, so it follows from the same lemma that the entries $a,b,i$ do not form a $231$ pattern in $\sigma'$. This implies that $i$ appears to the left of $b$ in $\sigma'$. Thus, the entries $i,b,i+1$ appear in this order in $\sigma'$. Let $\sigma''$ be the permutation obtained from $\sigma'$ by swapping the positions of $i$ and $i+1$. We have $\widetilde t_i(\sigma'')=\sigma'$. Since $s(\sigma')=\widetilde t_i(\pi)$, it follows immediately from the definition of $s$ (and the fact that $b$ lies between $i+1$ and $i$ in $\sigma''$) that $s(\sigma'')=\pi$. Thus, the map $\widetilde t_i:s^{-1}(\pi)\to s^{-1}(\widetilde t_i(\pi))$ is surjective.  
\end{proof}

The first part of the preceding lemma implies the following theorem, which is somewhat interesting in its own right. 

\begin{theorem}\label{Thm4}
If $\pi,\pi'\in S_n$ are such that $\pi'\leq_{\lef} \pi$, then $|s^{-1}(\pi')|\geq |s^{-1}(\pi)|$. 
\end{theorem}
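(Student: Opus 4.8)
The statement to prove is Theorem \ref{Thm4}: if $\pi' \leq_{\lef} \pi$, then $|s^{-1}(\pi')| \geq |s^{-1}(\pi)|$.

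The plan is to reduce to the covering case and apply Lemma \ref{Lem9}.

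By definition of the left weak order, $\pi' \leq_{\lef} \pi$ means there is a sequence $i_1, \ldots, i_m \in [n-1]$ with $\widetilde t_{i_m} \circ \cdots \circ \widetilde t_{i_1}(\pi) = \pi'$. So it suffices to handle a single application of some $\widetilde t_i$: namely, to show $|s^{-1}(\widetilde t_i(\rho))| \geq |s^{-1}(\rho)|$ for every $\rho \in S_n$ and every $i \in [n-1]$. Then the general inequality follows by composing/telescoping along the chain $\pi, \widetilde t_{i_1}(\pi), \widetilde t_{i_2}\widetilde t_{i_1}(\pi), \ldots, \pi'$.

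For the single-step claim, fix $\rho$ and $i$. There are two cases. If $i$ appears to the left of $i+1$ in $\rho$ (or they are adjacent in increasing order — actually the definition says: if $i+1$ appears to the left of $i$, swap them; otherwise $\widetilde t_i(\rho) = \rho$), then $\widetilde t_i(\rho) = \rho$ and the inequality is an equality. If instead $i+1$ appears to the left of $i$ in $\rho$, then $\widetilde t_i(\rho)$ is the permutation obtained by swapping $i$ and $i+1$, and Lemma \ref{Lem9} (first part, applied with $\pi = \rho$) says $\widetilde t_i$ is an injection from $s^{-1}(\rho)$ into $s^{-1}(\widetilde t_i(\rho))$, hence $|s^{-1}(\widetilde t_i(\rho))| \geq |s^{-1}(\rho)|$. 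That's exactly what we need.

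There isn't really a hard part here — the content is all in Lemma \ref{Lem9}, which is assumed. The only thing to be careful about is the bookkeeping: making sure each step in the chain from $\pi$ down to $\pi'$ goes in the direction where the inequality points the right way (each $\widetilde t_{i_k}$ is applied to the current permutation and either fixes it or moves to something with at least as large a fertility), and noting that $\widetilde t_i$ of a permutation with $i+1$ left of $i$ indeed has $i$ left of $i+1$, so the operations are genuinely "going down" in the left weak order. No further obstacle.

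\begin{proof}
It suffices to prove that $|s^{-1}(\widetilde t_i(\rho))|\geq|s^{-1}(\rho)|$ for every $\rho\in S_n$ and every $i\in[n-1]$. Indeed, if this holds and $\pi'\leq_{\lef}\pi$, choose $i_1,\ldots,i_m\in[n-1]$ with $\widetilde t_{i_m}\circ\cdots\circ\widetilde t_{i_1}(\pi)=\pi'$, and set $\rho^{(0)}=\pi$ and $\rho^{(k)}=\widetilde t_{i_k}(\rho^{(k-1)})$ for $1\leq k\leq m$, so that $\rho^{(m)}=\pi'$. Then
\[|s^{-1}(\pi)|=|s^{-1}(\rho^{(0)})|\leq|s^{-1}(\rho^{(1)})|\leq\cdots\leq|s^{-1}(\rho^{(m)})|=|s^{-1}(\pi')|,\]
as desired.

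To prove the single-step inequality, fix $\rho\in S_n$ and $i\in[n-1]$. If $i$ appears to the left of $i+1$ in $\rho$, then $\widetilde t_i(\rho)=\rho$ by definition, so $|s^{-1}(\widetilde t_i(\rho))|=|s^{-1}(\rho)|$. Otherwise, $i+1$ appears to the left of $i$ in $\rho$, and $\widetilde t_i(\rho)$ is obtained from $\rho$ by swapping the positions of $i$ and $i+1$. By the first part of Lemma~\ref{Lem9} (applied with $\pi=\rho$), the map $\widetilde t_i$ is an injection from $s^{-1}(\rho)$ to $s^{-1}(\widetilde t_i(\rho))$. Hence $|s^{-1}(\widetilde t_i(\rho))|\geq|s^{-1}(\rho)|$. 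This completes the proof.
\end{proof}
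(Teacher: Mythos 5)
Your proof is correct and takes exactly the same route as the paper, which simply observes that Theorem~\ref{Thm4} follows from the first part of Lemma~\ref{Lem9} by iterating along the chain of $\widetilde t_i$'s; you have merely spelled out the telescoping that the paper leaves implicit.
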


We can now combine the Decomposition Lemma with these results concerning the left weak order to prove that the fertility statistic is strictly increasing as we move up the stack-sorting tree on $S_n$. It will be helpful to separate the following lemma from the rest of the proof of Theorem \ref{Thm2}.

\begin{lemma}\label{Lem10}
Given a permutation $\pi$ whose normalization is of the form $r\mu (r+1)(r+2)\cdots n$ for some nonempty permutation $\mu\in S_{r-1}$, we let $\overrightarrow\pi$ be the permutation with the same set of entries as $\pi$ whose normalization is $\mu r(r+1)(r+2)\cdots n$. We have $|s^{-1}(\pi)|\leq|s^{-1}(\overrightarrow\pi)|$.
\end{lemma}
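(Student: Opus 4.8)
\textbf{Proof plan for Lemma \ref{Lem10}.}

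The plan is to set $\pi$ to be the permutation with normalization $r\mu(r+1)\cdots n$ (we may work with the normalization itself, since fertility depends only on relative order). Write $\mu=\mu_1\cdots\mu_{r-1}$. The key observation is that $r$ is a tail-bound descent of $\pi$: the tail of $\pi$ is $(r+1),\ldots,n$, and every hook with southwest endpoint $(1,r)$ must have its northeast endpoint among the entries $r+1,\ldots,n$, which are exactly the tail. So I would apply the Decomposition Lemma (Theorem \ref{Thm3}) with $d=1$. For each $k\in\{r+1,\ldots,n\}$ there is a hook $H_k\in\SW_1(\pi)$ with northeast endpoint at the position of $k$; its unsheltered subpermutation is $r$ followed by $k+1,\ldots,n$ (relative order $1\,2\cdots(n-k+1)$, fertility $1$... wait, more carefully, $\pi_U^{H_k}=\pi_1\,\pi_{j+1}\cdots\pi_n$ where $j$ is the position of $k$, so it consists of $r$ and the entries $k+1,\ldots,n$), and its sheltered subpermutation $\pi_S^{H_k}$ consists of $\mu$ together with $r+1,\ldots,k-1$. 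Since $r+1,\ldots,k-1$ all lie to the right of all of $\mu$ and are increasing, $\pi_S^{H_k}$ has normalization $\mu' (k-r)(k-r+1)\cdots$... i.e. it is $\mu$ (normalized on its own entry set $\{1,\ldots,r-1\}$, which up to normalization is just $\mu$) followed by $r,r+1,\ldots,k-1$ in normalized form; its fertility is $|s^{-1}(\mu)|$ when $k=r+1$... no — again, $\pi_S^{H_{r+1}}$ is just $\mu$, and $\pi_S^{H_k}$ for larger $k$ is $\mu$ followed by an increasing run. Thus the Decomposition Lemma gives $|s^{-1}(\pi)|=\sum_{k=r+1}^{n}|s^{-1}(U_k)|\cdot|s^{-1}(S_k)|$ where $U_k,S_k$ are these pieces.

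Next I would do the same for $\overrightarrow\pi$, whose normalization is $\mu r(r+1)\cdots n$. Here the relevant tail-bound descent is the position $r-1$ (the last position of $\mu$), since $\mu_{r-1}<r$... hmm, I need the descent to be tail-bound, and I should instead look at the largest entry of $\mu$ or use that $\mu$ is nonempty. Actually a cleaner route: apply Theorem \ref{Thm4}/Lemma \ref{Lem9} directly. The entries $r$ and $r-1$ (or rather: whatever entry is ``$r$'' in $\overrightarrow\pi$ versus its predecessor) — here is the real idea. Moving from $\pi$ to $\overrightarrow\pi$ we are sliding the entry corresponding to $r$ rightward past all of $\mu$. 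At the level of the left weak order, $\overrightarrow\pi$ is obtained from $\pi$ by a sequence of transpositions $\widetilde t_j$ that each move a smaller entry leftward past $r$; but the left weak order comparison goes the wrong way ($\pi\leq_{\lef}\overrightarrow\pi$ would give $|s^{-1}(\pi)|\geq|s^{-1}(\overrightarrow\pi)|$, the reverse of what we want). So instead I will match terms in the two Decomposition Lemma expansions. For $\overrightarrow\pi$, applying the Decomposition Lemma at the tail-bound descent just before $r$ (using that $\mu$ is nonempty so there is at least one entry of $\mu$ to play the role, and the hook from the southwest endpoint at the top of $\mu$'s graph to $r,r+1,\ldots$ is tail-bound) yields a sum $\sum_k |s^{-1}(U_k')|\cdot|s^{-1}(S_k')|$, and I expect a term-by-term comparison: the unsheltered pieces will be identical or comparable, and the sheltered piece $S_k$ for $\pi$ (which is $\mu$ followed by an increasing run) will be related to $S_k'$ for $\overrightarrow\pi$ by a left-weak-order inequality, so that Theorem \ref{Thm4} applies to each pair.

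The main obstacle, I expect, is getting the bookkeeping of the two decompositions to line up cleanly — in particular identifying the correct tail-bound descent of $\overrightarrow\pi$ and checking that the hooks of $\overrightarrow\pi$ are in bijective correspondence with those of $\pi$ in a way that respects both the unsheltered and sheltered factors. A subtlety is that in $\overrightarrow\pi$ the entry $r$ is no longer in the first position, so the set $\SW_d(\overrightarrow\pi)$ for the relevant $d$ might contain extra hooks (with northeast endpoint not in the tail of $\overrightarrow\pi$ but rather at $r$); one must either show these do not arise or that they only increase the sum. I anticipate that the safest execution is: (i) prove the formula $|s^{-1}(\pi)|=\sum_{k=r+1}^n|s^{-1}(\pi^{[k]}_U)|\,|s^{-1}(\pi^{[k]}_S)|$ explicitly, where $\pi^{[k]}_S$ is the normalization of ``$\mu$ followed by $r,\ldots,k-1$''; (ii) prove by an almost identical argument that $|s^{-1}(\overrightarrow\pi)|=\sum_{k=r+1}^n|s^{-1}(\pi^{[k]}_U)|\,|s^{-1}(\overrightarrow{\pi^{[k]}_S})|$ where now $\overrightarrow{\pi^{[k]}_S}$ is the normalization of ``(top entry of $\mu$), (rest of $\mu$ with that entry removed, in order), $r,\ldots,k-1$'' — wait, this is exactly $\overrightarrow{(\pi^{[k]}_S)}$ in the sense of the lemma we are proving, but on a shorter permutation! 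So the lemma will actually follow by strong induction on $r$: the base case $r=1$ or small, and the inductive step reduces each summand's sheltered factor to an instance of the lemma for a smaller value of the parameter, giving $|s^{-1}(\pi^{[k]}_S)|\leq|s^{-1}(\overrightarrow{\pi^{[k]}_S})|$ term by term. That inductive structure should make the argument go through, with the hook-bookkeeping in step (ii) being the only genuinely delicate point.
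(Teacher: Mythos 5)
Your high-level outline --- decompose both $\pi$ and $\overrightarrow\pi$ via the Decomposition Lemma and compare term by term with an inductive appeal on the pieces --- has the right shape, but you have pivoted on the wrong descent, and this mislocates where the recursion should live. You decompose $\pi$ at position $1$ (southwest endpoint $(1,r)$). That is a legitimate tail-bound descent, and it makes every unsheltered factor $\pi_U^{H_k}=r\,(k+1)\cdots n$ normalize to an increasing permutation, pushing all the recursive content into the sheltered factors $\pi_S^{H_k}$, which normalize to ``$\mu$ followed by an increasing run starting at $r$.'' Three things then break. (i) There is no decomposition of $\overrightarrow\pi=\mu r(r+1)\cdots n$ whose unsheltered factors are also identities, because $\overrightarrow\pi$ does not begin with the maximum of any prefix; so the formula $|s^{-1}(\overrightarrow\pi)|=\sum_k|s^{-1}(\pi^{[k]}_U)|\,|s^{-1}(\overrightarrow{\pi^{[k]}_S})|$ with shared $U$ factors is not an instance of the Decomposition Lemma applied to $\overrightarrow\pi$. (ii) The sheltered factor $\mu r(r+1)\cdots$ is not of the form $r'\mu'(r'+1)\cdots n'$ demanded by the lemma (its first entry is $\mu_1$, generically not the largest entry before the increasing run), so ``$\overrightarrow{\pi^{[k]}_S}$'' is not a well-defined recursive instance; what you describe instead --- moving the top entry of $\mu$ to the front --- is the \emph{inverse} of the lemma's $\overrightarrow{\cdot}$ operation, so the inequality $|s^{-1}(\pi^{[k]}_S)|\le|s^{-1}(\overrightarrow{\pi^{[k]}_S})|$ is not supplied by the inductive hypothesis and in fact points the wrong way. (iii) When $\mu$ has positive tail length (e.g.\ $\mu=213$, $\pi=4213567$), the position of $r-1$ in $\overrightarrow\pi$ is not even a descent, and the tail of $\overrightarrow\pi$ extends further left than your hook bookkeeping assumes; you flagged this but did not resolve it.

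The paper's proof instead decomposes at the position of $r-1$ (the largest entry of $\mu$), not at the position of $r$. When $\mu$ has tail length $0$, the position $d$ of $r-1$ in $\pi$ is a tail-bound descent of $\pi$, and $d-1$ is a tail-bound descent of $\overrightarrow\pi$. Sending the hook $H$ of $\pi$ with northeast endpoint $(j,j)$ to the hook $\overrightarrow H$ of $\overrightarrow\pi$ with northeast endpoint $(j-1,j-1)$, one checks that $\pi_S^H$ and $\overrightarrow\pi_S^{\overrightarrow H}$ have the \emph{same} relative order (so these factors contribute equally), while $\pi_U^H=r\mu_1\cdots\mu_{d-1}(j+1)\cdots n$ is a strictly shorter permutation of exactly the form $r''\mu''(\text{tail})$ required by the lemma, with $\overrightarrow{\pi_U^H}$ having the same relative order as $\overrightarrow\pi_U^{\overrightarrow H}$. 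So the induction is on the length $n$ and is applied to the $U$ factors rather than the $S$ factors, and $\overrightarrow\pi$'s one extra hook (ending at $(n,n)$) contributes a nonnegative surplus. The case where $\mu$ has positive tail length is reduced to the tail-length-$0$ case by the left-weak-order inequality of Theorem~\ref{Thm4}, which is the additional reduction your plan would also need.
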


\begin{proof}
The lemma is obvious if $n\leq 1$, so we may assume $n\geq 2$ and proceed by induction on $n$. Without loss of generality, we may assume that $\pi$ is normalized. Thus, $\pi=r\mu(r+1)(r+2)\cdots n$. If $\mu=123\cdots (r-1)$, then $\pi\neq\overrightarrow{\pi}=123\cdots n$. As mentioned in the introduction, it is known (see the solution to Exercise 23 in Chapter 8 of \cite{Bona}) that the fertility of $123\cdots n$ is strictly greater than the fertility of every other permutation in $S_n$ (this fact also follows easily from the Decomposition Lemma and the fact that $|s^{-1}(123\cdots m)|=C_m$). Thus, we may assume $\mu\neq 123\cdots (r-1)$. 

Let us assume for the moment that $\mu$ has tail length $0$. Let $d$ be such that $\pi_d=r-1$. Because $\mu$ has tail length $0$, the index $d$ is a tail-bound descent of $\pi$. Note that $d-1$ is a tail-bound descent of $\overrightarrow\pi$. Given a hook $H\in\SW_d(\pi)$ with northeast endpoint $(j,j)$, let $\overrightarrow H$ be the hook in $\SW_{d-1}(\overrightarrow\pi)$ with northeast endpoint $(j-1,j-1)$. The map $\SW_d(\pi)\to\SW_{d-1}(\overrightarrow \pi)$ given by $H\mapsto \overrightarrow H$ is well-defined and injective. One can check that $\pi_S^H$ has the same relative order as $\overrightarrow\pi_S^{\overrightarrow H}$ and that $\overrightarrow{\pi_U^H}$ has the same relative order as $\overrightarrow\pi_U^{\overrightarrow H}$ (see Figure~\ref{Fig2}). Since permutations with the same relative order have the same fertility, we can invoke the induction hypothesis and Theorem~\ref{Thm3} to obtain \[|s^{-1}(\pi)|=\sum_{H\in\SW_d(\pi)}|s^{-1}(\pi_U^H)|\cdot|s^{-1}(\pi_S^H)|\leq\sum_{H\in\SW_d(\pi)}\left|s^{-1}\left(\overrightarrow{\pi_U^H}\right)\right|\cdot|s^{-1}(\pi_S^H)|\] \[=\sum_{H\in\SW_d(\pi)}\left|s^{-1}\left(\overrightarrow\pi_U^{\overrightarrow H}\right)\right|\cdot\left|s^{-1}\left(\overrightarrow\pi_S^{\overrightarrow H}\right)\right|\leq\sum_{H'\in\SW_{d-1}(\overrightarrow\pi)}\left|s^{-1}\left(\overrightarrow\pi_U^{H'}\right)\right|\cdot\left|s^{-1}\left(\overrightarrow\pi_S^{H'}\right)\right|=|s^{-1}(\overrightarrow\pi)|.\]

Finally, suppose the tail length of $\mu$, say $\ell$, is positive. By the definition of tail length, we can write $\mu=\mu' (r-\ell)(r-\ell+1)\cdots(r-1)$, where $\mu'$ has tail length $0$. Let $\tau=\widetilde t_{r-\ell}\circ\widetilde t_{r-\ell+1}\circ\cdots\circ\widetilde t_{r-1}(\pi)$. We have $\tau\leq_{\lef}\pi$, so it follows from Theorem~\ref{Thm4} that $|s^{-1}(\tau)|\geq |s^{-1}(\pi)|$. Now, \[\tau=(r-\ell)\mu'(r-\ell+1)(r-\ell+2)\cdots n.\] Since $\mu'$ has tail length $0$, it follows from the case considered in the previous paragraph (with $r-\ell$ replacing $r$) that $|s^{-1}(\tau)|\leq|s^{-1}(\overrightarrow{\tau})|$. Observing that $\overrightarrow{\tau}=\overrightarrow{\pi}$ completes the proof.   
\end{proof}

\begin{figure}[h]
\begin{center}
\includegraphics[width=.7\linewidth]{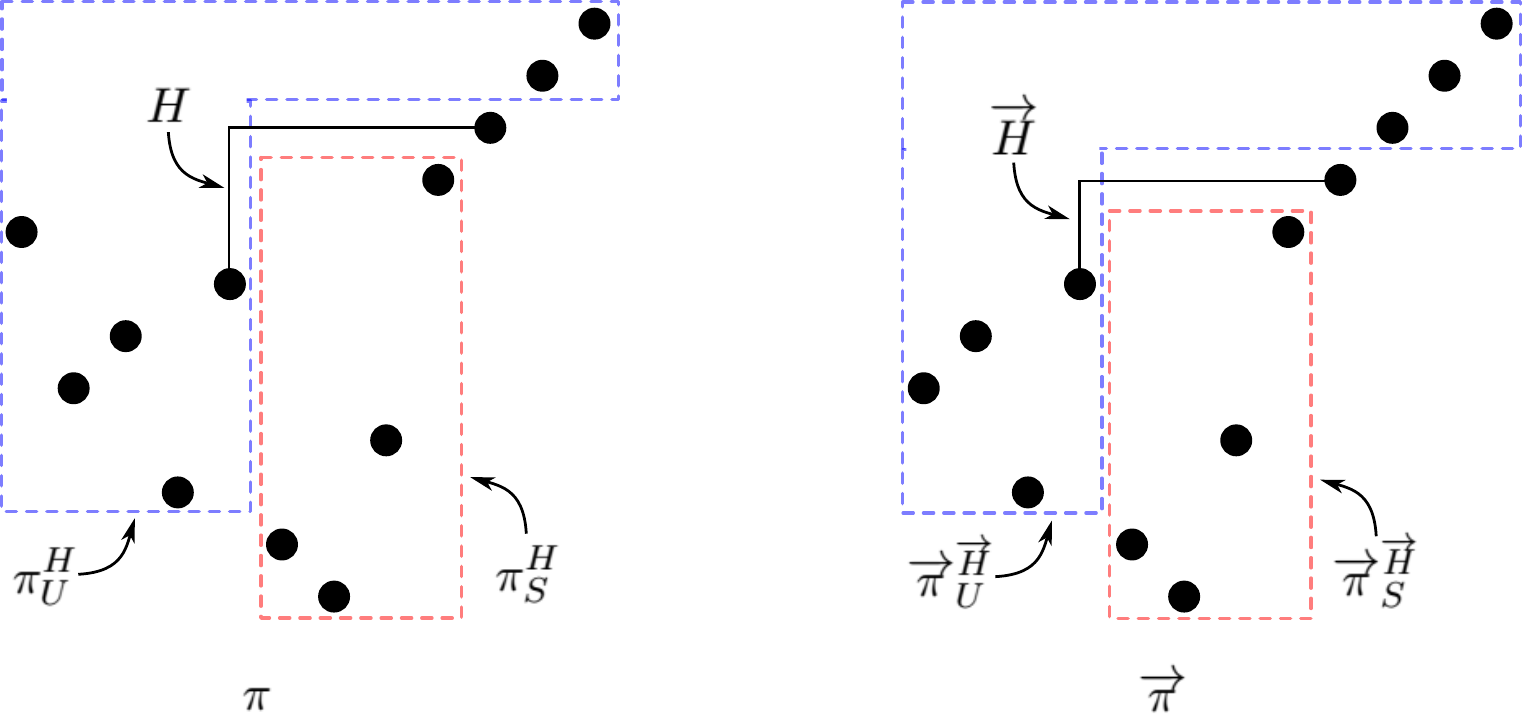}
\end{center}  
\caption{An illustration of the proof of Lemma~\ref{Lem10}. In this case, $\mu=5637214$ has tail length $0$. Notice that $\pi_S^H=2149$ has the same relative order as $\protect\overrightarrow{\pi}_S^{\protect\overrightarrow H}=2148$. Since $\pi_U^H=8\,5\,6\,3\,7\,11\,12$, the permutation $\protect\overrightarrow{\pi_U^H}=5\,6\,3\,7\,8\,11\,12$ has the same relative order as $\protect\overrightarrow\pi_U^{\protect\overrightarrow H}=5\,6\,3\,7\,10\,11\,12$.}
\end{figure}\label{Fig2}

\begin{proof}[Proof of Theorem~\ref{Thm2}]
Let $\sigma\in S_n$ be a permutation with tail length $\ell$, and let $\pi=s(\sigma)$. We want to show that $|s^{-1}(\sigma)|\leq|s^{-1}(\pi)|$, where equality holds if and only if $\sigma=123\cdots n$. This is trivial if $n\leq 1$, so we may assume $n\geq 2$ and proceed by induction on $n$. If $n-\ell=0$, then $\sigma=\pi=123\cdots n$, so $|s^{-1}(\sigma)|=|s^{-1}(\pi)|$. Thus, we may assume $n-\ell\geq 1$ and proceed by induction on $n-\ell$ (with $n$ already fixed). The assumption $n-\ell\geq 1$ is equivalent to the statement that $\sigma\neq 123\cdots n$, so our goal is to prove the strict inequality $|s^{-1}(\sigma)|<|s^{-1}(\pi)|$. Let us write $\sigma=L(n-\ell) R(n-\ell+1)(n-\ell+2)\cdots n$. We consider three cases. 

\noindent {\bf Case 1:} Assume $L$ is nonempty and contains the entry $n-\ell-1$. Let $d-1$ be the length of $L$ so that $\sigma_d=n-\ell$. Note that $d$ is a tail-bound descent of $\sigma$. It follows from the definition of $s$ that the tail length of $\pi$ is $\ell+1$ and that $\pi_{d-1}=n-\ell-1$. Thus, $d-1$ is a tail-bound descent of $\pi$. For $1\leq j\leq \ell$, let $H^{(j)}$ be the hook of $\sigma$ with southwest endpoint $(d,n-\ell)$ and northeast endpoint $(n-\ell+j,n-\ell+j)$. For $1\leq j\leq\ell+1$, let $\overline H^{(j)}$ be the hook of $\pi$ with southwest endpoint $(d-1,n-\ell-1)$ and northeast endpoint $(n-\ell-1+j,n-\ell-1+j)$. One can verify (see Figure~\ref{Fig3}) that $\pi_U^{\overline H^{(j)}}$ has the same relative order as $s(\sigma_U^{H^{(j)}})$ and that $\pi_S^{\overline H^{(j)}}$ has the same relative order as $s(\sigma_S^{H^{(j)}})$ (when $1\leq j\leq\ell$). Since permutations with the same relative order have the same fertility, we can invoke the inductive hypothesis to see that \[\left|s^{-1}\left(\sigma_U^{H^{(j)}}\right)\right|\leq \left|s^{-1}\left(s\left(\sigma_U^{H^{(j)}}\right)\right)\right|=\left|s^{-1}\left(\pi_U^{\overline H^{(j)}}\right)\right|\] and \[\left|s^{-1}\left(\sigma_S^{H^{(j)}}\right)\right|\leq \left|s^{-1}\left(s\left(\sigma_S^{H^{(j)}}\right)\right)\right|=\left|s^{-1}\left(\pi_S^{\overline H^{(j)}}\right)\right|.\] According to the Decomposition Lemma (Theorem~\ref{Thm3}), we have \[|s^{-1}(\sigma)|=\sum_{j=1}^\ell \left|s^{-1}\left(\sigma_U^{H^{(j)}}\right)\right|\cdot\left|s^{-1}\left(\sigma_S^{H^{(j)}}\right)\right|\leq \sum_{j=1}^\ell \left|s^{-1}\left(\pi_U^{\overline H^{(j)}}\right)\right|\cdot\left|s^{-1}\left(\pi_S^{\overline H^{(j)}}\right)\right|\] 
\begin{equation}\label{Eq3}
\leq\sum_{j=1}^{\ell+1} \left|s^{-1}\left(\pi_U^{\overline H^{(j)}}\right)\right|\cdot\left|s^{-1}\left(\pi_S^{\overline H^{(j)}}\right)\right|=|s^{-1}(\pi)|.
\end{equation} Suppose by way of contradiction that the inequality $|s^{-1}(\sigma)|\leq |s^{-1}(\pi)|$ is actually an equality. Since $\sigma\in s^{-1}(\pi)$, we have $|s^{-1}(\sigma)|=|s^{-1}(\pi)|>0$. Thus, there exists $j\in\{1,\ldots,\ell\}$ such that $\left|s^{-1}\left(\sigma_U^{H^{(j)}}\right)\right|\cdot\left|s^{-1}\left(\sigma_S^{H^{(j)}}\right)\right|>0$. We are assuming the inequalities in \eqref{Eq3} are equalities, so we must have $\left|s^{-1}\left(\sigma_U^{H^{(j)}}\right)\right|=\left|s^{-1}\left(\pi_U^{\overline H^{(j)}}\right)\right|$ and $\left|s^{-1}\left(\sigma_S^{H^{(j)}}\right)\right|=\left|s^{-1}\left(\pi_S^{\overline H^{(j)}}\right)\right|$. By induction on $n$, this forces $\sigma_U^{H^{(j)}}$ and $\sigma_S^{H^{(j)}}$ to be increasing permutations. Consequently, $d$ is the only descent of $\sigma$. However, this means that $\pi_U^{H^{(\ell+1)}}$ and $\pi_S^{H^{(\ell+1)}}$ are increasing permutations, so their fertilities are positive. It follows that $\left|s^{-1}\left(\pi_U^{H^{(\ell+1)}}\right)\right|\cdot\left|s^{-1}\left(\pi_S^{H^{(\ell+1)}}\right)\right|>0$, so the second inequality in \eqref{Eq3} is strict. This is our desired contradiction.  

\begin{figure}[h]
\begin{center}
\includegraphics[width=.7\linewidth]{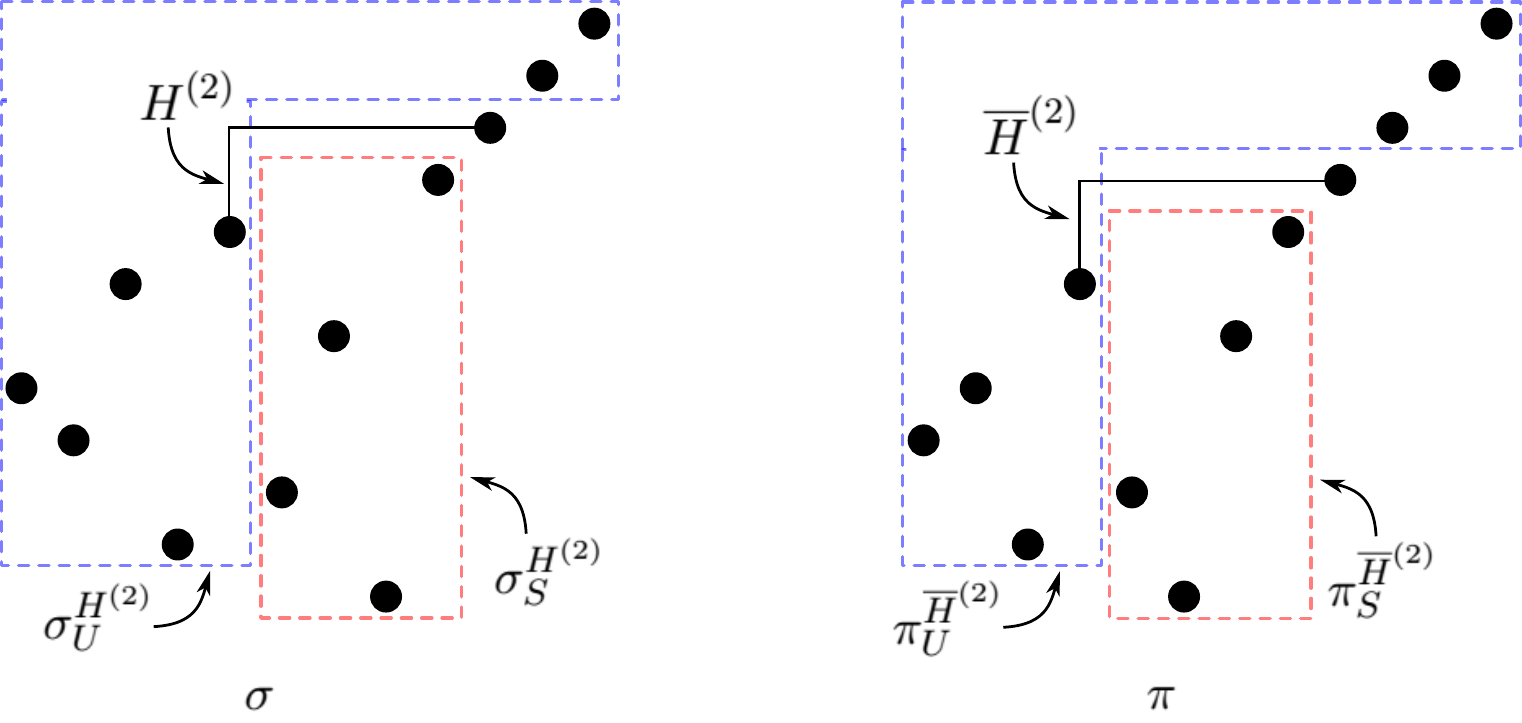}
\end{center}  
\caption{An illustration of Case 1 in the proof of Theorem~\ref{Thm2}. We have $j=2$ in this example. Notice that $\pi_U^{\overline H^{(2)}}=4\,5\,2\,7\,10\,11\,12$ has the same relative order as $s\left(\sigma_U^{H^{(2)}}\right)=4\,5\,2\,7\,8\,11\,12$. Similarly, $\pi_S^{\overline H^{(2)}}=3168$ has the same relative order as $s\left(\sigma_S^{H^{(2)}}\right)=3169$.}
\end{figure}\label{Fig3}

\noindent {\bf Case 2:} Assume $L$ is nonempty and does not contain the entry $n-\ell-1$. Let $m$ be the largest entry in $L$. Let $\widetilde L$ be the permutation obtained from $L$ by replacing $m$ with $n-\ell-1$. Let $\widetilde R$ be the permutation obtained from $R$ by decreasing each of the entries $m+1,\ldots,n-\ell-1$ by $1$. Let $\widetilde\sigma=\widetilde L (n-\ell)\widetilde R(n-\ell+1)(n-\ell+2)\cdots n$ and $\widetilde\pi=s(\widetilde\sigma)=s(\widetilde L)s(\widetilde R)(n-\ell)(n-\ell+1)\cdots n$. Notice that $\sigma=\widetilde t_m\circ\widetilde t_{m+1}\circ\cdots\circ\widetilde t_{n-\ell-2}(\widetilde\sigma)$. By repeatedly applying the second part of Lemma~\ref{Lem9} (with $a=n-\ell$), we find that $|s^{-1}(\widetilde \sigma)|=|s^{-1}(\sigma)|$. Similarly, we have $\pi=\widetilde t_m\circ\widetilde t_{m+1}\circ\cdots\circ\widetilde t_{n-\ell-2}(\widetilde\pi)$, so $\pi\leq_{\lef}\widetilde\pi$. Theorem \ref{Thm4} now tells us that $|s^{-1}(\pi)|\geq |s^{-1}(\widetilde\pi)|$. Because $n-\ell-1$ is in $\widetilde L$, we can appeal to Case 1 to see that $|s^{-1}(\widetilde\sigma)|<|s^{-1}(\widetilde\pi)|$. Thus, $|s^{-1}(\sigma)|<|s^{-1}(\pi)|$. 

\noindent {\bf Case 3:} Assume $L$ is empty. This means that $\sigma=(n-\ell) R (n-\ell+1)(n-\ell+2)\cdots n$. We have $s(\overrightarrow \sigma)=s(R)(n-\ell)(n-\ell+1)(n-\ell+2)\cdots n=s(\sigma)=\pi$, where $\overrightarrow\sigma$ is as defined in the statement of Lemma~\ref{Lem10}. According to that lemma, the inequality $|s^{-1}(\sigma)|\leq|s^{-1}(\overrightarrow\sigma)|$ holds. It is at this point in the proof that we use induction on $n-\ell$. Since $\overrightarrow\sigma$ is a permutation in $S_n$ with tail length at least $\ell+1$, the inductive hypothesis implies that $|s^{-1}(\overrightarrow\sigma)|\leq |s^{-1}(s(\overrightarrow\sigma))|$, with equality if and only if $\overrightarrow\sigma=123\cdots n$. If $\overrightarrow\sigma\neq 123\cdots n$, then we are done because \[|s^{-1}(\sigma)|\leq |s^{-1}(\overrightarrow\sigma)|<|s^{-1}(s(\overrightarrow\sigma))|=|s^{-1}(\pi)|.\] If $\overrightarrow\sigma=123\cdots n$, then $\pi=123\cdots n$. In this case, we again have the strict inequality $|s^{-1}(\sigma)|<|s^{-1}(\pi)|$ because $123\cdots n$ has a strictly larger fertility than each other permutation in $S_n$ (by Exercise 23 in Chapter 8 of \cite{Bona}).   
\end{proof}

\section{Future Directions}

\subsection{Average Depth} In the first part of the paper, we established improved asymptotic estimates for the average depth in the stack-sorting tree on $S_n$ (equivalently, for the average time complexity of the algorithm that sorts via iterating $s$). Note, however, that it is still not known if the limit $\lim\limits_{n\to\infty}\dfrac{\mathcal D_n}{n}$ exists. West \cite{West} conjectured that this limit does exist; it would be exciting to have a proof of this conjecture. 

We computed $\sd'(\pi)$ for $1000$ random permutations in $S_{400}$. The average of $\sd'(\pi)/400$ for these permutations was $0.784$, and the standard deviation was $0.140$. Thus, we are willing to state the following strengthening of West's conjecture. 

\begin{conjecture}\label{Conj1}
The limit $\lim\limits_{n\to\infty}\dfrac{\mathcal D_n}{n}$ exists and lies in the interval $(0.77,0.81)$. 
\end{conjecture}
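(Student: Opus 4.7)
The plan is to attack the two claims of Conjecture \ref{Conj1} separately: existence of the limit, and the numerical window $(0.77, 0.81)$. Both attacks would go through the cleaner surrogate $\mathcal D_n'$ and the $\eta$-iteration process on ordered set partitions introduced in Section 2, so that Proposition~\ref{Prop1} transfers the conclusions back to $\mathcal D_n$.

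For existence, I would first show that $\sd'$ is $1$-Lipschitz under adjacent transpositions, i.e.\ $|\sd'(\pi) - \sd'(\pi \circ (i\,i{+}1))| \le 1$ for all $\pi \in S_n$ and $i \in [n-1]$. This is plausible because swapping two adjacent entries alters $\mathcal B_1(\pi)$ in at most one block and changes the merge structure of $\eta$ by at most one step per level. Combined with the bounded-differences (Efron--Stein or Azuma) inequality for uniform $\pi \in S_n$, this yields $\mathrm{Var}(\sd'(\pi)) = O(n)$ and hence $\sd'(\pi)/n \to \mathcal D_n'/n$ in probability. The near-subadditivity $\mathcal D_{n+1}' \le \mathcal D_n' + 1$ from Lemma~\ref{Lem1}, together with the normalization map $f:S_{n+1}\to S_n$ used in its proof, should then let one compare every subsequential limit of $\mathcal D_n'/n$ to every other via a coupling of $\pi$ and $f(\pi)$ and the concentration bound, forcing convergence.

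For the numerical window, I would refine both sides of Theorem~\ref{Thm1}. On the upper side, the proof currently uses a dichotomy: if $\mathscr E_m(\pi)$ is quarantined the gain is $i_m/n$, otherwise the trivial bound $1$ is used. I would replace the ``$1$'' by a conditional expectation: given that $\mathscr E_m(\pi)$ fails to be quarantined, the lattice path $\mathscr L'(\pi)$ crosses the diagonal at some first point, and one can recurse on the induced sub-partition above the crossing. This replaces the constants $(a_\ell,b_\ell)$ in the series $\sum_\ell F_\ell(0)$ by a strictly smaller dominated sequence. On the lower side, instead of bounding $\sd'(\pi) \ge \max_i |\mathscr B_i(\pi)|$, I would use that after one $\eta$-step the longest block of $\eta(\mathcal B_1(\pi))$ has its own (smaller but nontrivial) length, so that $\sd'(\pi) \ge |\mathscr B_i(\pi)| + |(\eta(\mathcal B_1(\pi)))_{i'}|$ for appropriate $i,i'$; iterating yields a telescoping lower bound whose asymptotics are controlled by the Golomb--Dickman functional applied to the sequence of size-rescaled partitions.

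The hard part will be that $\eta(\mathcal B_1(\pi))$ is \emph{not} distributed like $\mathcal B_1(\pi')$ for a uniform $\pi' \in S_m$, so Foata's transition lemma gives only one free step of exact analysis. Any refinement beyond the first level requires either a new bijective description of the joint distribution of $(|\mathcal M_1(\pi)|, |\mathcal M_2(\pi)|, \ldots)$, or a coupling of the iterated $\eta$-process with a simpler stochastic model (such as a coalescing random walk on records) whose expected absorption time can be computed in closed form. I expect this to be the principal obstruction to closing the gap and landing inside $(0.77,0.81)$: numerical simulation (average $0.784$, standard deviation $0.140$ in $S_{400}$) is consistent with the conjectured interval, but translating this into rigorous bounds demands an asymptotic handle on the $\eta$-dynamics that Section 2 only begins to develop.
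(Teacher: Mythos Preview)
This statement is Conjecture~\ref{Conj1}: the paper offers only numerical evidence (the Monte Carlo average of $0.784$ over $1000$ samples in $S_{400}$) and explicitly leaves it open, so there is no proof on the paper's side to compare against. Your write-up is, as you yourself frame it, a research plan rather than a proof.

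On the existence half, there is a genuine gap beyond the obstructions you already flag. Even granting the $1$-Lipschitz estimate $|\sd'(\pi)-\sd'(\pi\circ(i\ i{+}1))|\le 1$ (itself not obvious: an adjacent swap can create or destroy a right-to-left maximum and reshape $\mathcal B_1(\pi)$ in a way that propagates through several levels of $\eta$), Efron--Stein would yield $\mathrm{Var}(\sd'(\pi))=O(n)$ and hence concentration of $\sd'(\pi)/n$ around its mean $\mathcal D_n'/n$. But concentration of the random variable around its mean, together with the one-sided increment bound $\mathcal D_{n+1}'\le\mathcal D_n'+1$ of Lemma~\ref{Lem1}, does \emph{not} force the sequence of means $\mathcal D_n'/n$ to converge. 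Lemma~\ref{Lem1} caps the upward step but puts no floor on $\mathcal D_{n+1}'-\mathcal D_n'$, so slow oscillation of $\mathcal D_n'/n$ between two distinct subsequential limits is not excluded. The coupling of $\pi$ with $f(\pi)$ you invoke only reproduces the inequality of Lemma~\ref{Lem1}; to upgrade it to a convergence proof you would need a genuine cross-scale relation such as $\mathcal D_{m+n}'\le\mathcal D_m'+\mathcal D_n'+o(m+n)$, and nothing in Section~2 supplies one. The sentence ``compare every subsequential limit to every other via a coupling \ldots\ forcing convergence'' is precisely where the missing idea would have to go.

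On the numerical window, your proposed refinements of both sides of Theorem~\ref{Thm1} are reasonable directions, and you have correctly located the obstruction yourself: after one application of $\eta$ the block process is no longer the block decomposition of a uniform permutation, so neither Foata's bijection nor the ballot-path count behind Lemma~\ref{Lem6} is available at levels $\ell\ge 2$ without new input. As it stands, neither half of the conjecture is brought close to a proof by the plan.
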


\subsection{Fertility Monotonicity} In the second part of the paper, we gave a lengthy argument showing that $|s^{-1}(\sigma)|\leq|s^{-1}(s(\sigma))|$ for all permutations $\sigma$. Our proof relied on the Decomposition Lemma from \cite{DefantCounting}. It also relied on Theorem \ref{Thm4}, which states that the fertility statistic is decreasing on the left weak order. It would be nice to have a direct injective proof of the inequality $|s^{-1}(\sigma)|\leq|s^{-1}(s(\sigma))|$. 

\subsection{Revstack-Sorting}

Let us denote by $\rev$ the reverse operator defined on permutations by $\rev(\pi_1\cdots\pi_n)=\pi_n\cdots\pi_1$. In \cite{Dukes}, Dukes investigated the map $\revstack:=s\circ\rev$ and stated Steingr\'imsson's Sorting Conjecture, which says that $|\revstack^{-t}(123\cdots n)|\geq|s^{-t}(123\cdots n)|$ for all $0\leq t<n$. Currently, this conjecture is known in the cases $0\leq t\leq 2$ and $n-3\leq t\leq n-1$. 

It would be interesting to have analogues of the results in this article for the map $\revstack$. More specifically, define $\mathcal D_n^{\rev}$ to be the average number of iterations of the map $\revstack$ needed to sort a permutation in $S_n$ into the identity permutation $123\cdots n$. Numerical evidence suggests that $\displaystyle \lim_{n\to\infty}\dfrac{\mathcal D_n^{\rev}}{n}$ exists and is approximately $0.46$, which is interesting because it indicates that, in the average case, sorting permutations by iterating $\revstack$ is more efficient than sorting by iterating $s$ (by Theorem~\ref{Thm1}). There are currently no nontrivial estimates known for $\displaystyle\liminf_{n\to\infty}\dfrac{\mathcal D_n^{\rev}}{n}$ or $\displaystyle\limsup_{n\to\infty}\dfrac{\mathcal D_n^{\rev}}{n}$; it would interesting to just have a proof of the inequality \[\limsup_{n\to\infty}\dfrac{\mathcal D_n^{\rev}}{n}<\liminf_{n\to\infty}\dfrac{\mathcal D_n}{n}.\] We also have the following conjecture related to fertility monotonicity. 

\begin{conjecture}
For every permutation $\sigma\in S_n$, we have \[|\revstack^{-1}(\sigma)|\leq|\revstack^{-1}(\revstack(\sigma))|,\] where equality holds if and only if $\sigma=123\cdots n$. 
\end{conjecture}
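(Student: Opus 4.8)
The plan is to mirror the proof of Theorem~\ref{Thm2} as closely as possible, replacing the recursive decomposition $s(\pi)=s(L)s(R)m$ by the corresponding identity for $\revstack=s\circ\rev$. The essential point is that Theorem~\ref{Thm2}'s proof used only three ingredients: the recursive definition of $s$ (to locate tail-bound descents in $s(\sigma)$ and to identify $H$-sheltered and $H$-unsheltered subpermutations after applying $s$), the Decomposition Lemma (Theorem~\ref{Thm3}), and Theorem~\ref{Thm4} together with Lemma~\ref{Lem9}. Of these, only the Decomposition Lemma and Theorem~\ref{Thm4} are intrinsically about $s$; if one observes that $\revstack^{-1}(\sigma)=\rev(s^{-1}(\sigma))$, then every statement about fibers of $\revstack$ translates verbatim into a statement about fibers of $s$. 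So the first step is to establish the dictionary $|\revstack^{-1}(\tau)|=|s^{-1}(\tau)|$ for all $\tau$, which is immediate since $\rev$ is a bijection on $S_n$. Consequently $|\revstack^{-1}(\sigma)|\leq|\revstack^{-1}(\revstack(\sigma))|$ is literally the inequality $|s^{-1}(\sigma)|\leq|s^{-1}(s(\rev(\sigma)))|$.

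Thus the conjecture reduces to proving that $|s^{-1}(\sigma)|\leq|s^{-1}(s(\rev(\sigma)))|$ for every $\sigma\in S_n$, with equality iff $\sigma=123\cdots n$. Writing $\rho=\rev(\sigma)$, this says $|s^{-1}(\rev(\rho))|\leq|s^{-1}(s(\rho))|$. The second step is therefore to understand how $\rev$ interacts with the fertility statistic and with the stack-sorting map. One approach: relate $\rev(\rho)$ to $\rho$ via the left (or right) weak order, so that Theorem~\ref{Thm4} can be brought to bear — but $\rev(\rho)$ is generally very far from $\rho$ in either weak order, so this is unlikely to work directly. A more promising route is to adapt the case analysis in the proof of Theorem~\ref{Thm2} itself. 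Writing $\rho$ with its largest entry as $\rho=Lm R$, we have $\rev(\rho)=\rev(R)\,m\,\rev(L)$ while $\revstack(\rho)=s(\rho)=s(L)s(R)m$. One would locate a tail-bound descent of $s(\rho)$ (the entry $n-\ell-1$ where $\ell$ is the tail length), apply the Decomposition Lemma on both $\rev(\rho)$ and $s(\rho)$, match up the sheltered and unsheltered pieces, and invoke induction on $n$. The subtlety is that on the $\rev(\rho)$ side one must also decompose — meaning one needs $n-\ell-1$ (or an appropriate descent) to be tail-bound in $\rev(\rho)$ as well, which forces a careful analysis of where the tail of $\rev(\rho)$ lies relative to the tail of $s(\rho)$.

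The main obstacle I anticipate is precisely this mismatch between the tail structure of $\rev(\rho)$ and that of $s(\rho)=\revstack(\rho)$: unlike in the $s$ case, where $\sigma$ and $s(\sigma)$ share a recursive skeleton that makes their tails align after one more iteration, the permutations $\rev(\rho)$ and $s(\rho)$ need not have comparable tail lengths, so the clean inductive matching of hooks in Case~1 of Theorem~\ref{Thm2} may fail. One fallback is to first massage $\rho$ using Lemma~\ref{Lem9}-type moves (which preserve $|s^{-1}(\cdot)|$ on one side and only increase it on the other, via Theorem~\ref{Thm4}) to reduce to the situation where $\rho$ has tail length $0$ and its largest entry sits at the end, i.e.\ $\rho$ itself is $231$-avoiding-friendly, and then handle that base configuration by hand; the remaining cases would peel off via the same three-case scheme. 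A second fallback, if the combinatorics of the double decomposition proves intractable, is to prove the weaker monotonicity $|\revstack^{-1}(\sigma)|\leq|\revstack^{-1}(\revstack(\sigma))|$ only for permutations $\sigma$ of small stack-sorting depth under $\revstack$, consistent with the known cases $t\le 2$ of Steingr\'imsson's conjecture, and leave the general statement as the conjecture it is stated to be. I would expect the first, full approach to go through with enough bookkeeping, with the tail-alignment lemma being the genuinely new combinatorial input required.
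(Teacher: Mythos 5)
This statement is an open \emph{conjecture} in the paper (in the Future Directions section); the paper offers no proof of it, so there is nothing on the paper's side to compare against. Your attempt, by your own account, does not close the gap either, so what I can usefully do is assess whether the gap you identify is the real one and whether your reduction is sound.

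Your opening reduction is correct and worth stating cleanly: since $\revstack = s\circ\rev$ and $\rev$ is an involution on $S_n$, one has $\revstack^{-1}(\tau)=\rev\bigl(s^{-1}(\tau)\bigr)$, hence $|\revstack^{-1}(\tau)|=|s^{-1}(\tau)|$ for every $\tau$. Therefore the conjecture is equivalent to: for all $\sigma\in S_n$, $|s^{-1}(\sigma)|\leq|s^{-1}(s(\rev(\sigma)))|$, with equality iff $\sigma=123\cdots n$. This is \emph{not} an instance of Theorem~\ref{Thm2}, because the reversal sits inside the outer $s$ and there is no weak-order relation connecting $\sigma$ to $\rev(\sigma)$ that Theorem~\ref{Thm4} could exploit.

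The rest of your proposal is a plan, not a proof. The honest difficulty you flag --- that $\rev(\rho)$ and $s(\rho)$ do not share the recursive tail/hook skeleton that Case~1 of the proof of Theorem~\ref{Thm2} matches up --- is precisely the unresolved content of the conjecture. In the $s$-only case, the key structural fact is that if $d$ is a tail-bound descent of $\sigma$ with $\sigma_d=n-\ell$, then $d-1$ is a tail-bound descent of $s(\sigma)$ and, crucially, $\pi_U^{\overline H^{(j)}}$ and $\pi_S^{\overline H^{(j)}}$ have the same relative order as $s(\sigma_U^{H^{(j)}})$ and $s(\sigma_S^{H^{(j)}})$; this is what lets the Decomposition Lemma and the induction mesh. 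The map $\rho\mapsto\rev(\rho)$ destroys this alignment: the largest entry of $\rho$ sits at position $|L|+1$ in $\rho$ but at position $|R|+1$ in $\rev(\rho)$, so the tail-bound descent and the corresponding hooks in $\rev(\rho)$ bear no fixed relation to those of $s(\rho)$. You gesture at a ``tail-alignment lemma'' as the ``genuinely new combinatorial input required,'' but you do not state it, let alone prove it, and you give no evidence that such a lemma is true. Writing ``I would expect the first, full approach to go through with enough bookkeeping'' is an expression of optimism, not an argument. As a proposal this correctly locates where the work must happen, but it does not constitute a proof of the conjecture, and the conjecture remains open.
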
 

\subsection{Pop-Stack-Sorting}
In \cite{Avis}, Avis and Newborn introduced a variant of Knuth's stack-sorting machine known as \emph{pop-stack-sorting}. There is a deterministic variant of their machine that has received a lot of attention in recent years \cite{Asinowski, Asinowski2, ClaessonPop, ClaessonPop2, Elder, Pudwell}; this variant is a function that we will denote by $\mathsf{Pop}$. This function simply reverses all of the descending runs (i.e., maximal decreasing subsequences) of its input. For example, the descending runs of $7634512$ are $763$, $4$, $51$, and $2$, so $\mathsf{Pop}(7634512)=3674152$. Motivated by a geometric problem involving noncollinear points, Ungar proved that the maximum number of iterations of $\mathsf{Pop}$ needed to sort a permutation in $S_n$ to the identity permutation $123\cdots n$ is $n-1$; this result is much more difficult than the corresponding fact for the stack-sorting map. As far as we are aware, there are no nontrivial results known about the \emph{average} number of iterations of $\mathsf{Pop}$ needed to sort a permutation in $S_n$ into the identity. We believe that this quantity, which we denote by $\mathcal D_n^{\mathsf{Pop}}$, deserves further attention (the authors of \cite{Asinowski} also suggested studying $\mathcal D_n^{\mathsf{Pop}}$). Attempting to initiate this work, we state the following conjecture. 

\begin{conjecture}
We have \[\lim_{n\to\infty}\frac{\mathcal D_n^{\mathsf{Pop}}}{n}=1.\]
\end{conjecture}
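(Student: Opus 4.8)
\textbf{Proof proposal for the statement $\displaystyle\lim_{n\to\infty}\frac{\mathcal D_n^{\mathsf{Pop}}}{n}=1$.}

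The plan is to establish the matching lower bound $\liminf_{n\to\infty}\mathcal D_n^{\mathsf{Pop}}/n\geq 1$, since the trivial upper bound $\mathsf{Pop}$-depth $\leq n-1$ (Ungar's theorem, cited in the excerpt) already gives $\limsup_{n\to\infty}\mathcal D_n^{\mathsf{Pop}}/n\leq 1$. The key structural fact I would use is that a single application of $\mathsf{Pop}$ reverses each descending run, and in particular an entry can only move left if it is strictly less than the entry immediately to its left in the current permutation; more usefully, the position of a large entry changes slowly. Concretely, I would track the entry $1$: after one step of $\mathsf{Pop}$, the entry $1$ moves to the front of the descending run containing it, and thereafter it sits at the front of its run. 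The obstruction to fast sorting is that an entry $k$ that lies far to the right of all of $1,2,\ldots,k-1$ is stuck: to bring the block $\{1,\ldots,k\}$ to the front in sorted order takes roughly (distance travelled) many steps, because in each $\mathsf{Pop}$ step an entry advances past at most one maximal ascending gap of the entries below it. This is exactly analogous to the quantity $M_\tau=\max_i e_i(\tau)$ used in the proof of Proposition~\ref{Prop1}: I would prove a lemma asserting $\mathrm{depth}_{\mathsf{Pop}}(\tau)\geq \max_{1\le i\le n} e_i(\tau)$, where $e_i(\tau)$ counts entries of $\{i+1,\ldots,n\}$ left of $i$.

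The heart of the argument is then the lemma that under $\mathsf{Pop}$, for each $i$ the quantity $e_i$ can decrease by at most $1$ per iteration. To see this, fix $i$ and look at the entry $i$ together with the entries larger than $i$ that currently lie to its left. Applying $\mathsf{Pop}$ reverses descending runs; the entry $i$ ends up at the start of its run, and a careful case analysis shows that the number of larger entries to the left of $i$ drops by at most one (only the ``head'' of the relevant run can be leapfrogged). A clean way to organise this is to note $\mathsf{Pop}(\pi)$ and $\pi$ agree to within reversing runs, and a larger entry $b$ passes to the right of $i$ only if $b$ was the maximum of a descending run with $i$ in it and the run's structure forces at most one such $b$. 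Granting the lemma, if $\tau$ has $e_a(\tau)=M_\tau$ for some $a$, then $\mathsf{Pop}$-depth of $\tau$ is at least $M_\tau$: after fewer than $M_\tau$ iterations we still have $e_a>0$, so $a$ does not sit in its correct relative position and the permutation is not the identity.

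Finally I would invoke the probabilistic estimate already used in the excerpt: by the Foata-type correspondence and the Deutsch--Elizalde-style computation recalled in the proof of Proposition~\ref{Prop1}, for a uniformly random $\pi\in S_n$ we have $M_\pi \geq n - O(\sqrt n\log n)$ with probability $1-o(1)$ (indeed $M_\pi$ equals $n$ minus the position of the longest ``left-to-right record block,'' and the excerpt shows $\frac{1}{n!}\sum_{k\ge K_n}|\mathfrak Q_{n,k}|=1-o(1)$ with $K_n=\lfloor n-2\sqrt n\log n\rfloor$). Combining, $\mathcal D_n^{\mathsf{Pop}}\geq (1-o(1))(n-O(\sqrt n\log n))=n-o(n)$, hence $\liminf \mathcal D_n^{\mathsf{Pop}}/n\geq 1$, which together with the upper bound gives the claim. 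The main obstacle I anticipate is the run-reversal case analysis in the lemma: $\mathsf{Pop}$ acts quite differently from $s$, so I would need to verify carefully that reversing a long descending run containing $i$ truly moves at most one larger entry past $i$, rather than many — intuitively true because within a single descending run, all entries above $i$ that precede $i$ are themselves decreasing, and after reversal they land after $i$ except for the one that was originally the run's maximum. Making this rigorous, perhaps by induction on run length or by directly comparing inversions $(j,i)$ with $j$-entry $>i$, is where the real work lies.
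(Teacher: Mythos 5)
Before evaluating the argument, note that the statement in question is an open conjecture in the paper; the paper proves only $\liminf_{n\to\infty}\mathcal D_n^{\mathsf{Pop}}/n\geq\tfrac12$ and explicitly says that ``any improvement upon this lower bound would be very interesting.'' So you are attempting to settle an open problem, and your proposed proof does not succeed.

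The gap is in the central lemma: you claim that each application of $\mathsf{Pop}$ decreases $e_i(\tau)$ (the number of entries larger than $i$ and to its left) by at most $1$. This is false, and the case analysis you sketch is wrong. If $i$ sits at position $m$ inside a descending run $a_1>a_2>\cdots>a_k$ (so $a_m=i$), then the $m-1$ entries $a_1,\ldots,a_{m-1}$ are all larger than $i$ and all to its left, and after the run is reversed \emph{all} of them land to the right of $i$ (not all but one, as you assert; the run's maximum $a_1$ moves to the far right of the run). Entries in other runs keep their position relative to $i$. So a single $\mathsf{Pop}$ step can decrease $e_i$ by $m-1$, which can be as large as $n-1$. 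Concretely, $\pi=n(n-1)\cdots21$ has $e_1(\pi)=n-1$ and $M_\pi=n-1$, yet $\mathsf{Pop}(\pi)=12\cdots n$, so the $\mathsf{Pop}$-depth is $1$. The claimed inequality $\mathrm{depth}_{\mathsf{Pop}}(\tau)\geq M_\tau$ therefore fails badly, and the chain $\mathcal D_n^{\mathsf{Pop}}\geq(1-o(1))M_\pi$ collapses.

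The salvageable kernel of your idea is precisely what the paper uses to get $\tfrac12$: after one application of $\mathsf{Pop}$, every descending run has length at most $3$, so from that point on $e_i$ can drop by at most $2$ per step, giving $\mathrm{depth}_{\mathsf{Pop}}(\tau)\geq 1+\tfrac12 M_{\mathsf{Pop}(\tau)}$ and hence $\liminf\mathcal D_n^{\mathsf{Pop}}/n\geq\tfrac12$. Closing the factor-of-two gap between $\tfrac12$ and $1$ requires controlling how often both available slots in a run are actually used to ``pass'' $i$, which is exactly the obstruction the paper flags as the hard part; your write-up does not address it.
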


It is easy to check that the descending runs of a permutation in the image of $\mathsf{Pop}$ are all of length at most $3$. Using this fact, it is not too difficult to prove that \[\liminf_{n\to\infty}\frac{\mathcal D_n^{\mathsf{Pop}}}{n}\geq\frac{1}{2}.\] We believe that any improvement upon this lower bound would be very interesting. 

\section{Acknowledgments}
The author thanks the anonymous referees for helpful comments. The author was supported by a Fannie and John Hertz Foundation Fellowship and an NSF Graduate Research Fellowship.

\end{document}